\documentclass[11pt, a4paper]{article}

\usepackage[utf8]{inputenc}
\usepackage[T1]{fontenc}

\usepackage{amsmath, amsfonts, amsthm, amssymb}   
\usepackage{mathtools}
\usepackage[italicdiff]{physics}
\usepackage{bm}
\usepackage[margin=1in]{geometry}
\usepackage{microtype}
\usepackage{booktabs}
\usepackage{enumitem}
\usepackage{parskip}
\usepackage{hyphenat}
\usepackage{subfigure}
\usepackage{subcaption}
\newtheorem{theorem}{Theorem}
\newtheorem{lemma}{Lemma}
\newtheorem{remark}{Remark}

\usepackage[style=numeric, sorting=nyt, backend=biber]{biblatex}
\usepackage[colorlinks=true, allcolors=blue]{hyperref}
\usepackage{cleveref}
\crefname{equation}{Equation}{Equations}

\newcommand{\bb}[1]{\mathbb{#1}}
\newcommand{\cl}[1]{\mathcal{#1}}
\newcommand{\f}[2]{\frac{#1}{#2}}
\newcommand{\oo}{\infty}
\newcommand{\hil}{\cl L^2_\rho(\Xi)}
\newcommand{\sob}{\cl H^s_\rho(\Xi)}
\DeclareMathOperator{\D}{D_{0,\textit{t}}^\alpha}
\usepackage{authblk}
\title{Stochastic Galerkin Method for Fractional Boundary Value Problems: Convergence Analysis and Numerical Treatment}
\date{}

\author[1]{Abhishek Kumar Singh} 
\author[2]{Dhyan Divakar Laad}
\author[2]{Vaibhav Mehandiratta\thanks{Corresponding author: \texttt{vaibhavm@goa.bits-pilani.ac.in}}}
\author[3]{Roland Pulch}

\affil[1]{Department of Mathematical Sciences, Indian Institute of Technology (Banaras Hindu University), Varanasi 221005, India}
\affil[2]{Department of Mathematics, Birla Institute of Technology and Science, Pilani, K K Birla Goa Campus, Zuarinagar, Sancoale, Goa 403726, India}
\affil[3]{Institute of Mathematics and Computer Science, University of Greifswald, Walther-Rathenau-Str. 47, 17489 Greifswald, Germany}

\begin{document}

\maketitle

\begin{abstract}
We study two-point fractional boundary value problems with uncertain input data, where randomness may enter through the coefficients and boundary conditions. To quantify the resulting uncertainty in the solution, we employ the generalized polynomial chaos (gPC) framework and develop a stochastic Galerkin formulation of the problem. A particular focus of this work is the convergence analysis of the resulting approximation. Rather than imposing assumptions directly on the stochastic coefficients appearing in the gPC representation, we introduce minimal regularity assumptions on the input data and use them to establish the properties required for the convergence analysis. Based on these results, we prove the convergence of the stochastic Galerkin approximation to the corresponding gPC solution. Numerical experiments are presented to illustrate the theoretical findings and to investigate the influence of random coefficients and boundary conditions on the statistical behavior of the solution.
\end{abstract}

\section{Introduction}
Boundary value problems (BVPs) arise naturally in the mathematical modeling of a wide range of phenomena in science and engineering. They play an important role in applications such as heat conduction, diffusion, fluid mechanics, elasticity, and electrostatics, where information about the solution is prescribed at different points of the domain. A classical example is the two-point BVP for ordinary differential equations (ODEs), as encountered in the static deflection of a uniform elastic beam, which is modeled by a fourth-order ODE~\cite{li2008unified}. However, in many applications, the underlying processes exhibit memory effects and non-local behaviour, which cannot be adequately described by classical differential equations. Differential equations involving non-local operators, such as fractional operators, provide a useful framework for modeling such phenomena~\cite{xu2013numerical,kumar2022numerical, mehandiratta2023well} . Among the most commonly used fractional operators are the Riemann--Liouville derivatives and Caputo derivatives. However, the Caputo derivative is particularly well suited to BVPs, as it allows boundary conditions to be prescribed in terms of classical (integer-order) derivatives, unlike the Riemann--Liouville derivative, which often leads to fractional-order initial or boundary data~\cite{almeida2016modeling,ford2011fractional}.


Deterministic models usually assume that the input data, including initial and boundary conditions, and material properties, are known exactly. In practical problems, however, this is rarely the case, as such quantities are often available only with some degree of uncertainty. To address the variability of model parameters, a common technique is the substitution of the parameters by random variables. Accounting for these uncertainties is therefore essential for obtaining reliable and physically meaningful predictions. Uncertainty quantification (UQ) is the branch of applied mathematics that is concerned with characterizing how uncertainties in the input data propagate through a model and affect its response. Two widely used approaches within this framework are Monte Carlo methods and gPC expansions~\cite{Robert2004,Xiu2003,sullivan2015,Xiu2010}. Monte Carlo methods are broadly applicable and straightforward to implement; however, achieving accurate statistical estimates typically requires a large number of samples due to their relatively slow convergence. In contrast, gPC methods can achieve significantly faster convergence when the solution depends smoothly on the random parameters.

The main idea of the gPC framework is to represent uncertain quantities in terms of orthogonal polynomials depending on random variables. Based on this representation, gPC methods can generally be divided into non-intrusive and intrusive approaches, depending on how the polynomial chaos coefficients are computed.  In non-intrusive methods~\cite{xiu2005high,babuvska2007stochastic,blatman2011adaptive}, the original deterministic problem is solved repeatedly for different realizations of the random variables using an existing deterministic solver, without reformulating the governing equations, whereas intrusive methods~\cite{xiu2009efficient,augustin2012stochastic} substitute a truncated gPC expansion into the governing equations and project the resulting residual onto a finite-dimensional polynomial chaos space. A well-known example of the intrusive approach is the stochastic Galerkin method. The term intrusive refers primarily to the reformulation of the original problem, although the practical implications of this reformulation depend on the type of equation under consideration. For instance, in the case of elliptic PDEs, an existing finite element solver typically cannot be reused to solve the resulting stochastic Galerkin system. Consequently, a separate solver or a suitable modification of the existing solver is often required, making the method intrusive also at the implementation level~\cite{eigel2014adaptive,gunzburger2014stochastic}. In contrast, for ODEs, differential-algebraic equations (DAEs), and fractional differential equations (FDEs), an existing time-stepping or predictor-corrector method can often be adapted more directly to solve the resulting stochastic Galerkin system. For example, in recent work on FDEs~\cite{pulch2025stochastic}, the predictor-corrector method was employed to solve both the original FDE and the corresponding stochastic Galerkin system. Both classes of methods have been applied to a variety of problems that involve ODEs, DAEs, and PDEs. Non-intrusive methods are generally easier to implement, since they require no modification of existing solvers, whereas intrusive methods such as the stochastic Galerkin approach can, under conditions favorable to gPC convergence, achieve comparable or better accuracy at a lower computational cost, at the expense of a more involved reformulation of the problem.

\subsection{Motivation and Contribution of the Paper}
Given the importance of non-local operators in capturing memory effects and the presence of random inputs in many physical systems, as discussed above, it is natural to study differential equations that incorporate both features. However, the literature on fractional differential equations with random parameters remains limited (see Table \ref{tab:litrev}). The studies listed in Table \ref{tab:litrev} consider initial value problems involving fractional operators and develop corresponding stochastic Galerkin systems for their numerical solution, along with the convergence results under certain assumptions.


Motivated by the limited literature and the importance of BVPs, in this paper, we study the interplay of UQ and two-point fractional BVPs (FBVPs). Such problems arise naturally, for instance, as steady state models of anomalous diffusion processes in porous and disordered media \cite{Jin2015}. Fractional derivatives are also used to model memory-dependent damping in viscoelastic beam vibrations \cite{DonmezDemir2012}, further motivating the need to understand how uncertainty in the input data propagates to the solution. Uncertainties may enter FBVPs through different sources, such as initial conditions, boundary conditions,  coefficients, or forcing terms. More specifically, we consider a linear two-point FBVP in which the coefficient of the fractional derivative depends on a random variable and the boundary conditions may also involve random variables, making the resulting problem a more realistic mathematical model.

\begin{table}
    \centering
    \footnotesize
    {\color{black}\begin{tabular}{p{0.5cm} p{4cm} p{9.5cm}}
         \hline
         \textbf{Year} &  \textbf{Author} &  \textbf{Results}\\ [1ex] \hline 
         2024 & Jornet \cite{jornet2024convergence} & Investigate the convergence of
the Galerkin projections for random non-linear FDEs\\ [1ex]  
2025 & Pulch and Singh \cite{pulch2025stochastic} & Proposed a stochastic Galerkin method for linear FDEs. \\ 
 \hline
    \end{tabular}}
     \caption{\textcolor{black}{Literature review of some recent research in the related domain.}}
     \label{tab:litrev}
\end{table}

To deal with these uncertainties, we employ the gPC framework together with the stochastic Galerkin method. This approach allows us to represent and propagate the uncertainty arising from the random coefficients and boundary conditions through the two-point FBVP. To the best of our knowledge, this is the first work that investigates two-point FBVPs involving random parameters using the stochastic Galerkin framework.

The main contributions of this paper are as follows:
\begin{itemize}
    \item We derive the stochastic Galerkin system for the two-point linear FBVPs with random parameters.
    \item In contrast to the existing works listed in Table \ref{tab:litrev}, where certain assumptions are imposed directly on the stochastic coefficients to establish the convergence of the stochastic Galerkin method, we impose minimum regularity assumptions only on the input data. Based on these assumptions, we establish the required properties of the stochastic coefficients and subsequently prove the convergence of the stochastic Galerkin method.
    \item The analysis establishes a direct connection between the regularity of the input data and the convergence of the stochastic Galerkin approximation, thereby avoiding additional a priori assumptions on the stochastic coefficients.
    \item Numerical experiments are presented to verify the theoretical convergence results and to demonstrate the effectiveness of the proposed stochastic Galerkin framework for FBVPs.
\end{itemize}

\subsection{Structure of the Paper}
The paper is organized as follows.
Section 2 introduces the fractional BVP and its stochastic reformulation using the gPC expansion and derives the corresponding stochastic Galerkin system. Section 3 establishes the convergence of the stochastic Galerkin approximation to the exact gPC solution under minimal regularity assumptions on the input data. Section 4 presents numerical experiments to validate the theoretical findings. Finally, Section 5 summarizes the main findings of this work and discusses possible directions for future research.

\section{Outline of the Problem}
This section describes the problem formulation and the stochastic Galerkin approach to find the approximate solution of the considered problem.

\subsection{Fractional Boundary Value Problem}
Let \(\alpha>0\) and \(f : [a, b] \to \bb R\) be such that \(f \in \cl{AC}^n[a, b]\), the space of functions with absolutely continuous \((n-1)\)-th derivatives, with \(n = \lceil \alpha \rceil\). Then, the Caputo fractional derivative of order \(\alpha\) of \(f\) is defined as
\[\D f(t) = \f 1{\Gamma (n - \alpha)} \int_0^t \f{f^{(n)}(\tau)}{(t - \tau)^{\alpha - n + 1}}\dd{\tau}.\]
The Caputo fractional derivative extends the familiar properties of the derivative to fractional orders, and thus, we may pose differential equations of fractional orders (FDEs) using the underlying derivative.

To this end, let \(T > 0\), \(\Xi \subset \bb R\) be a compact set and \(a : \Xi \to \bb R^+\) a measurable function (additional assumptions imposed on \(a\) are given in Section 3.1). Moreover, let \(y : [0, T] \times \Xi \to \bb R\) be a real-valued function, then for \(\alpha\in (1,2)\), we describe the following fractional boundary value problem (FBVP):
\begin{gather*}
\D y(t, \xi) = a(\xi)y(t, \xi), \tag{1} \label{fde}\\
y(0, \xi) = y_0(\xi), \quad  y(T, \xi) = y_1(\xi). \tag{2} \label{bound}
\end{gather*}

The general solution to the linear FDE \labelcref{fde} can be expressed in terms of the two parameter family of Mittag-Leffler functions as \cite[Theorem 5.12]{kilbas2006theory}
\[y(t, \xi) = c_0(\xi)E_{\alpha,1}(a(\xi)t^{\alpha}) + c_1(\xi)tE_{\alpha, 2}(a(\xi)t^{\alpha}) \tag{3} \label{sol}\]
where 
\[E_{\alpha, \beta}(z) = \sum_{k=0}^{\oo} \f{z^k}{\Gamma (\alpha k + \beta)}, \quad z \in \bb C, ~~\alpha,\beta>0\]
and the coefficient functions can be computed given boundary conditions \labelcref{bound} as
\[c_0(\xi) = y_0(\xi) \quad \text{and} \quad c_1(\xi) = \f{y_1(\xi) - y_0(\xi)E_{\alpha, 1}(a(\xi)T^\alpha)}{TE_{\alpha, 2}(a(\xi)T^\alpha)}.\]

\subsection{Stochastic Modeling}
We now model the parameter \(\xi\) considered in the problem \eqref{fde}-\eqref{bound} as a random variable \(\xi : \Omega \to \Xi\) on a probability space \((\Omega, \cl F, \mathbb P)\) as described in \cite[Section 2.2]{pulch2025stochastic}. Given a measurable function \(f : \Xi \to \bb R\), its expected value is given by
\[\mathbb E(f(\xi)) \coloneqq \int_{\Omega} f(\xi(\omega))\dd{\mathbb P(\omega)}.\]
Our study is restricted to the case of continuous probability measures \(P\), and as a consequence of the Radon-Nikodym theorem, the probability measure \(P\) admits a density function \(\rho\). Thus, it follows that
\[\mathbb E(f(\xi)) = \int_{\Xi} f(\xi')\rho(\xi')\dd{\xi'}.\]
We may now define an inner product
\[\langle f, g \rangle \coloneqq \mathbb E(f(\xi)g(\xi)) = \int_{\Xi} f(\xi')g(\xi')\rho(\xi')\dd{\xi'}\]
for two measurable functions \(f\) and \(g\), and the associated Hilbert space
\[\hil \coloneqq \{f : \Xi \to \bb R \mid \text{\(f\) is measurable and \(\mathbb E(f(\xi)^2) < \oo\)}\}\]
with norm \(\norm{f}_{\hil} = \sqrt{\langle f, f \rangle}\). 

An orthonormal system \(\{\Phi_n\}\) for the Hilbert space can be constructed consisting of polynomials \(\Phi_n : \Xi \to \bb R\) for each \(n \in \bb N_0\) via the Gram-Schmidt process. Without loss of generality, we assume the degree of \(\Phi_n\) is \(n\). While most distributions are associated with a complete set of basis polynomials, exceptions do exist (as is the case with the log-normal distribution). Nevertheless, in what follows, we assume that the distribution of \(\xi\) admits a complete basis. Furthermore, in the subsequent analysis, the coefficients are determined such that the residual of the resulting approximation is always orthogonal to the subspace spanned by \(\{\Phi_0, \Phi_1, \dots , \Phi_n\}\) for \(n \in \bb N_0\).

The generalized polynomial chaos (gPC) expansion of a function \(f \in \hil\) is given by
\[f(\xi) = \sum_{k=0}^\oo f_k\Phi_k(\xi) \]
where \(f_k = \langle f, \Phi_k\rangle\) for every \(k \in \bb N_0\). Since \(\xi\) is modeled to be a random variable, the solution \labelcref{sol} of the considered FBVP \eqref{fde}--\eqref{bound} becomes a stochastic process. Assuming \(y(t, \cdot) \in \hil\), it possesses a gPC expansion
\[y(t, \xi) = \sum_{k=0}^\oo \hat{v}_k(t)\Phi_k(\xi) \tag{4} \label{gpc}\]
for all \(t \in [0, T]\) with coefficient functions \(\hat{v}_k : [0, T] \to \bb R\) for every \(k \in \bb N_0\). By defining the \(n\)th partial sum of \labelcref{gpc} as
\[\hat{y}^{(n)}(t, \xi) \coloneqq \sum_{k=0}^n \hat{v}_k(t)\Phi_k(\xi),\tag{5} \label{trunc}\]
one has
\[\lim_{n \to \oo} \norm{y(t, \cdot) - \hat{y}^{(n)}(t, \cdot)}_{\hil} = 0, ~\text{for all}~ t \in [0, T] \tag{6} \label{trueconv}.\]

\subsection{Stochastic Galerkin Method}\label{sgmethod}
The stochastic Galerkin approach is a spectral method used to solve a variety of random differential equations. Unlike non-intrusive sampling methods like Monte Carlo simulations, the stochastic Galerkin method incorporates the uncertainty directly into the solution by representing it as a gPC expansion. 
The resulting system consists of deterministic equations that can be solved numerically. 

Our objective is to compute approximations of the coefficient functions in the truncated series \labelcref{trunc}. Let \(\vb v^{(n)} = [v_0, v_1, v_2, \dotsc, v_n]^\top\) be a vector of approximate coefficients \(v_k \approx \hat{v}_k\) for \(k \in \{0, 1, \dots , n\}\). Define the approximate solution
\[\tilde{y}^{(n)}(t, \xi) \coloneqq \sum_{k=0}^n v_k(t)\Phi_k(\xi). \tag{7} \label{apx}\]
Inserting the approximation \labelcref{apx} into the FDE \labelcref{fde} yields a residual \(r^{(n)}(t, \xi)\):
\begin{align*}
r^{(n)}(t, \xi) &= \D \tilde{y}^{(n)}(t, \xi) - a(\xi)\tilde{y}^{(n)}(t, \xi) \\
&= \sum_{k=0}^n (\D v_k(t))\Phi_k(\xi) - a(\xi)\sum_{k=0}^n v_k(t)\Phi_k(\xi).
\end{align*}
Taking the inner product on \(\hil\) with \(\Phi_j\) and using the fact that the residual has to be orthogonal to the subspace, we have
\[\D v_j(t) - \sum_{k=0}^n v_k(t)\langle a\Phi_k, \Phi_j\rangle = 0, \quad j \in \{0, 1, \dots , n\}. \tag{8} \label{syseq}\]
Thus, the system \labelcref{syseq} can be represented in matrix-vector form as
\[\D \vb v^{(n)}(t) = A\vb v^{(n)}(t) \tag{9} \label{sysmat}\]
where the entries of the matrix \(A = [a_{ij}] \in \bb R^{(n+1) \times (n+1)}\) are given by
\[a_{ij} = \langle a\Phi_i, \Phi_j \rangle = \int_{\Xi} a(\xi')\Phi_i(\xi')\Phi_j(\xi')\rho(\xi')\dd{\xi'}.\]
The boundary conditions for the system are determined using the gPC expansion \labelcref{gpc} and the values of the approximate coefficient functions at the boundary coincide with the true values. Hence, for \(i \in \{0, \dots , n\}\), we have
\begin{align*}
\begin{split}
v_i(0) &= \hat{v}_i(0) = \langle y_0, \Phi_i \rangle = \int_{\Xi}y_0(\xi')\Phi_i(\xi')\rho(\xi')\dd{\xi'}, \\
v_i(T) &= \hat{v}_i(T) = \langle y_1, \Phi_i \rangle = \int_{\Xi}y_1(\xi')\Phi_i(\xi')\rho(\xi')\dd{\xi'}.
\end{split}
\tag{10} \label{sysbound}
\end{align*}
These equations require the additional assumption that the functions \(y_0\) and \(y_1\) are elements of \(\hil\). Finally, one can observe that Equations \eqref{sysmat}--\eqref{sysbound} represent a FBVP of a deterministic system which can be solved using the existing numerical methods.
\section{Convergence of the Stochastic Galerkin Method}
This section details the proof for convergence of the Galerkin system \eqref{sysmat}--\eqref{sysbound} to the gPC expansion \labelcref{gpc}.

\subsection{Auxiliary Results}\label{prelim}
We start by providing the following result from \cite[Theorem 7.32.1]{szego1939orthogonal} that gives the uniform bound for the orthonormal basis polynomials on a compact interval.

\begin{lemma}\label{phinbound}
Let \(\{\Phi_n\}_{n \in \mathbb{N}}\)
be a sequence of orthonormal basis polynomial in $ \hil$.
Then, there exist constants \(C > 0\) and \(\delta \geq 0\) such that \(\norm{\Phi_n}_\oo \leq Cn^\delta\) for all \(n \in \bb N\).
\end{lemma}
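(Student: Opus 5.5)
The approach is a Nikolskii-type (reverse Hölder) inequality between the sup norm and the weighted $L^2$ norm on polynomials of degree $n$, applied to $\Phi_n$, which has $\hil$-norm one. As stated the lemma needs some hypothesis on the density $\rho$; the reference \cite[Theorem 7.32.1]{szego1939orthogonal} concerns Jacobi-type weights. I would therefore prove it under the assumption, made tacitly in the paper, that $\Xi=[\xi_-,\xi_+]$ is a compact interval and that $\rho$ is bounded below by a Jacobi weight:
\[
\rho(\xi)\ \ge\ c\,(\xi_+-\xi)^{\mu}(\xi-\xi_-)^{\nu},\qquad c>0,\ \mu,\nu>-1 .
\]
This covers the uniform and beta distributions, and in particular every density bounded away from zero, which is the case $\mu=\nu=0$.

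\textbf{Step 1 (normalisation).} Apply the affine change of variables mapping $\Xi$ onto $[-1,1]$. This changes sup norms not at all and changes $L^2$ norms only by a fixed constant, so it suffices to work on $[-1,1]$ with $\rho\ge c\,w$, where $w(x)=(1-x)^{\mu}(1+x)^{\nu}$. Consequently, for every polynomial $p$,
\[
\norm{p}_{L^2_w}\ \le\ c^{-1/2}\norm{p}_{\hil}.
\]

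\textbf{Step 2 (reproducing-kernel bound).} Let $\{p_k\}$ be the orthonormal Jacobi polynomials for $w$. Write a polynomial $p$ of degree at most $n$ as $p=\sum_{k\le n}\beta_k p_k$. By Cauchy--Schwarz,
\[
|p(x)|\ \le\ \Big(\sum_{k=0}^n p_k(x)^2\Big)^{1/2}\norm{p}_{L^2_w}.
\]
Szegő's classical estimate for Jacobi polynomials gives $\norm{p_k}_\oo\le C'(k+1)^{\max(\mu,\nu,-1/2)+1/2}$. Setting $\gamma:=\max(\mu,\nu,-1/2)+1/2$ and summing,
\[
\sum_{k\le n}p_k(x)^2\ \le\ C''(n+1)^{2\gamma+1}
\quad\Longrightarrow\quad
\norm{p}_\oo\ \le\ \sqrt{C''}\,(n+1)^{\gamma+1/2}\norm{p}_{L^2_w}.
\]
In the model case $\mu=\nu=0$ everything is explicit: the normalised Legendre polynomials satisfy $|p_k|\le\sqrt{k+1/2}$, which yields $\norm{p}_\oo\le \tfrac{n+1}{\sqrt2}\norm{p}_{L^2}$.

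\textbf{Step 3 (apply to $\Phi_n$).} Take $p=\Phi_n$ and combine Steps 1 and 2. Since $\norm{\Phi_n}_{\hil}=1$ and $n+1\le 2n$ for $n\ge1$, we get
\[
\norm{\Phi_n}_\oo\ \le\ C\,n^{\delta},\qquad \delta=\gamma+\tfrac12\ \ge 0,
\]
with $C$ independent of $n$.

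The main obstacle is Step 2, the uniform growth bound for the comparison orthonormal family. The plan is to quote Szegő rather than reprove it. Without a lower bound on $\rho$ of the kind above the statement can fail: near a zero of $\rho$ of very high order, or one that is not of power type, $\norm{\Phi_n}_\oo$ can grow faster than any polynomial in $n$. For that reason the hypothesis on $\rho$ should be made explicit in the statement.
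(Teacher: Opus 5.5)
Your proof is correct under the hypothesis you add, and it takes a genuinely different route from the paper. The paper gives no argument beyond citing Szeg\H{o}'s Theorem~7.32.1. That theorem concerns Jacobi polynomials only, so the citation covers just the case where $\rho$ is itself (an affine image of) a Jacobi weight $(1-x)^{\mu}(1+x)^{\nu}$. In that case one reads off $\norm{\Phi_n}_\oo\sim n^{\max(\mu,\nu)+1/2}$ for $\max(\mu,\nu)\ge -1/2$, which is the sharp exponent quoted in the Remark after the lemma.

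You use Szeg\H{o} only for a comparison family. You then transfer the bound to $\rho$ via the Christoffel-function (Nikolskii) inequality $\abs{p(x)}^2\le\sum_{k\le n}p_k(x)^2\,\norm{p}_{L^2_w}^2$ combined with $\rho\ge c\,w$. This buys generality: it covers any density dominating a Jacobi weight, in particular any density bounded away from zero on an interval (e.g.\ a truncated Gaussian), which the citation does not reach. The price is half a power, since summing over $k\le n$ loses $n^{1/2}$. You get $\delta=\max(\mu,\nu,-1/2)+1$, e.g.\ $\delta=1$ instead of $1/2$ for the uniform distribution used in the experiments. Because (H1) demands $s>\delta+2$, this translates into a stronger regularity requirement downstream. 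When $\rho$ is exactly a Jacobi weight, you should skip the summation and use $\Phi_k=c^{-1/2}p_k$ directly to recover the sharp exponent.

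You are right that the lemma as literally stated, for an arbitrary density on a compact set, is false and needs an explicit hypothesis on $\rho$. One correction to your closing remark, though. A zero of finite power order, however high, does not cause superpolynomial growth. An endpoint zero of order $m$ (with $\rho$ otherwise bounded below) satisfies your own hypothesis with exponent $m$. Interior power-type zeros also give polynomial growth, although your hypothesis excludes them. Only infinitely flat zeros, such as $\rho(\xi)\propto e^{-1/(\xi_+-\xi)}$, break the polynomial bound.
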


\begin{remark}   
   We note that while no formal upper bound for \(\delta\) in the above lemma exists, in general, its values are typically small in practice. In particular, for widely used orthonormal Legendre and Chebyshev polynomials (of the first kind), \(\delta\) takes values \(1/2\) and \(0\), respectively. More generally, for the Jacobi polynomials having parameters \(\alpha\) and \(\beta\) with $\alpha,\beta>-1$, one has \(\delta = \max\{\alpha, \beta\} + 1/2\), which remains moderate for standard parameter ranges. Consequently, for most classical polynomial bases associated with bounded continuous weight functions in the Askey scheme, considered here, the value of \(\delta\) is less than or equal to \(1\) under commonly used parameter choices. A notable exception is the Gegenbauer family of polynomials, for which \(\delta\) takes the same value as the free parameter \(\lambda\).
\end{remark}
   
The preceding lemma includes only the cases for \(n \geq 1\). However, as per our assumption on \(\{\Phi_n\}\), \(\Phi_0\) is a degree \(0\) polynomial and can be bounded by a constant. Furthermore, in the cases of the aforementioned Jacobi and Gegenbauer polynomials, the orthonormality condition always implies \(\Phi_0(x) \equiv \pm 1\).

Next, to provide the regularity of the solution of the considered FBVP \eqref{fde}--\eqref{bound}, we define the weighted Sobolev space \(\sob\), for \(s=0,1,2,\dotsc,\)
\[\sob:= \left\{ f: \Xi \to \mathbb{R} : \dv[m]{f}{x}\in \cl L^2_\rho(\Xi),~0\leq m\leq s\right\},\]
equipped with an inner product
\[\langle f,g\rangle_{\sob}:=\sum_{m=0}^{s}\left \langle \dv[m]{f}{x}, \dv[m]{g}{x}\right \rangle.\]
We now impose the following hypothesis on the given data (functions) that are required for the subsequent convergence analysis of the proposed method. 
\begin{enumerate}[label=(H)]
\item [(H1)] In the boundary conditions \labelcref{bound}, the functions satisfy \(y_0, y_1 \in \sob\) with \(s>\delta + 2\).
\item [(H2)] The measurable function \(a\in \cl W^{s,\infty}(\Xi)\) is essentially bounded by a constant \(C_a > 0\), where \(\cl W^{s,\infty}\) denotes the standard Sobolev space defined by
\[\cl W^{s,\infty}(\Xi):=\left\{a:\Xi \to \mathbb{R}: \dv[m]{a}{x}\in \cl L^{\infty}(\Xi), 0\leq m\leq s\right\}.\]\label{h2}
\end{enumerate}

\begin{lemma}\label{yregularity}
Given \emph{(H1)--(H2)}, the solution \(y(t, \cdot)\) of the FBVP \eqref{fde}--\eqref{bound} belongs to the weighted Sobolev space \(\sob\) for all \(t\in [0,T]\).
\end{lemma}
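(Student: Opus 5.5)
We work from the explicit representation \labelcref{sol}. Fix $t\in[0,T]$ and write
\[
y(t,\xi)=y_0(\xi)\bigl[E_{\alpha,1}(a(\xi)t^\alpha)-g(t,\xi)E_{\alpha,1}(a(\xi)T^\alpha)\bigr]+y_1(\xi)\,g(t,\xi),
\qquad
g(t,\xi)\coloneqq\frac{tE_{\alpha,2}(a(\xi)t^\alpha)}{TE_{\alpha,2}(a(\xi)T^\alpha)} .
\]
The strategy is to show that every factor multiplying $y_0$ and $y_1$ belongs to $\cl W^{s,\infty}(\Xi)$. We then use the elementary fact that, for $u\in\cl W^{s,\infty}(\Xi)$ and $f\in\sob$, the product satisfies $uf\in\sob$. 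This follows from the Leibniz rule
\[
(uf)^{(m)}=\sum_{k=0}^m\binom{m}{k}u^{(m-k)}f^{(k)},
\]
together with
\[
\norm{u^{(m-k)}f^{(k)}}_{\hil}\le \norm{u^{(m-k)}}_{\cl L^\infty}\norm{f^{(k)}}_{\hil}.
\]
Combined with (H1), this gives the claim.

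\textbf{Step 1: composition with entire functions.} For $\beta\in\{1,2\}$ and $\tau\in[0,T]$, the map $\xi\mapsto E_{\alpha,\beta}(a(\xi)\tau^\alpha)$ is the composition of the entire function $E_{\alpha,\beta}$ with $a\in\cl W^{s,\infty}(\Xi)$. By (H2), $a$ takes values in the compact interval $[0,C_a]$ (recall $a>0$), so $\tau^\alpha a(\xi)\in[0,C_aT^\alpha]$. By Faà di Bruno's formula, the $m$th derivative of $E_{\alpha,\beta}(\tau^\alpha a)$ is a finite sum of terms
\[
\tau^{\alpha j}E^{(j)}_{\alpha,\beta}(\tau^\alpha a)\prod_i a^{(\ell_i)},\qquad \sum_i\ell_i=m,\ \ell_i\le s .
\]
Each such term is essentially bounded, because $E^{(j)}_{\alpha,\beta}$ is continuous on the compact set $[0,C_aT^\alpha]$ and $a^{(\ell)}\in\cl L^\infty$. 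Hence $E_{\alpha,\beta}(\tau^\alpha a(\cdot))\in\cl W^{s,\infty}(\Xi)$, with bounds uniform in $\tau\in[0,T]$.

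\textbf{Step 2: the reciprocal (main obstacle).} The factor $1/E_{\alpha,2}(a(\xi)T^\alpha)$ requires $E_{\alpha,2}$ to stay away from zero on the range of $aT^\alpha$. Here the positivity $a:\Xi\to\bb R^+$ is used: for $z\ge 0$ every term of the series is nonnegative, so
\[
E_{\alpha,2}(z)\ge \frac{1}{\Gamma(2)}=1 .
\]
The function $x\mapsto 1/x$ is smooth on $[1,\oo)$, so a second application of the chain rule (Faà di Bruno with $h(x)=1/x$, whose derivatives are bounded on $[1,\oo)$) gives $1/E_{\alpha,2}(a(\cdot)T^\alpha)\in\cl W^{s,\infty}(\Xi)$. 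If $a$ were allowed to be negative this step would fail, since zeros of $E_{\alpha,2}$ on the negative axis could make $c_1$ singular. We therefore state explicitly that the positivity and boundedness assumptions on $a$ are what make this step work.

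\textbf{Step 3: assembly.} The space $\cl W^{s,\infty}(\Xi)$ is an algebra, again by the Leibniz rule. Hence $g(t,\cdot)$ and $E_{\alpha,1}(a t^\alpha)-g(t,\cdot)E_{\alpha,1}(aT^\alpha)$ lie in $\cl W^{s,\infty}(\Xi)$. The multiplier fact from the first paragraph then gives $y(t,\cdot)\in\sob$ for every $t\in[0,T]$. The constants are uniform in $t$, which is useful for the later convergence analysis.
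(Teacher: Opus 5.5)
Your proof is correct and follows the same route as the paper: you show that every multiplier of $y_0$ and $y_1$ in the representation \eqref{sol} lies in $\cl W^{s,\infty}(\Xi)$, and then use that $\cl W^{s,\infty}(\Xi)$ multiplies $\sob$ into itself. You are more careful than the paper in two places. At the reciprocal step the paper only asserts $E_{\alpha,2}\neq 0$, while your bound $E_{\alpha,2}(z)\ge 1$ for $z\ge 0$ (using $a>0$) gives the separation from zero that is actually needed. And your Leibniz-rule argument treats the weighted space directly, instead of citing an unweighted multiplication result.
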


\begin{proof}
Since \(a\in \cl W^{s,\infty}(\Xi)\) and the Mittag-Leffler function \(E_{\alpha,\beta}\) is entire, it follows that
\(E_{\alpha,\beta}(a( \cdot )t^\alpha)\in \cl W^{s,\infty}( \Xi )\) for all \(t\in [0,T]\). Furthermore, in view of the multiplication property of the Sobolev spaces \cite{behzadan2021multiplication}, we have that \(y_0(\cdot) E_{\alpha,1}(a(\cdot)t^\alpha)\in \cl H_\rho^s(\Xi)\) for all \(t\in [0,T]\).

Now, since \(0 \neq E_{\alpha,2}(a(\cdot))\in \cl W^{s,\infty}(\Xi)\), using the same argument as above, we have that
\[
c_1(\cdot)
=\f{1}{TE_{\alpha,2}(a(\cdot)T^\alpha)}{\big(y_1(\cdot)-y_0(\cdot)E_{\alpha,1}(a(\cdot)T^\alpha)\big)}
\in \cl H_{\rho}^s(\Xi).
\]
Again, the multiplication property provides that
\(c_1(\cdot)tE_{\alpha,2}(a(\cdot)t^\alpha)\in \cl H_{\rho}^s(\Xi)\), which shows that \(y(t,\cdot)\in \cl H_{\rho}^s(\Xi)\).
\end{proof}

\begin{remark}
Notably, we only assume that the input functions belong to the Sobolev space \(\sob\). Under this minimal regularity assumption, we establish the uniform convergence of the gPC expansion and the superlinear convergence condition, as stated in Lemma~\ref{uniconv} and Lemma~\ref{superlinear}, respectively. In contrast to \cite[Section 3.1]{pulch2025stochastic}, where these properties are imposed as assumptions to derive the convergence analysis, our framework derives them directly from the regularity of the input data. Consequently, the assumptions required for the convergence analysis of the proposed stochastic Galerkin method are significantly relaxed.
\end{remark}


We now state an auxiliary result required to prove that the series \labelcref{gpc} and its Caputo derivative are uniformly convergent on \([0,T]\) for each \(\xi\in \Xi\).
\begin{lemma}\label{auxiliary}
Let \(\hat{v}_k\) denote the coefficients of the gPC expansion \eqref{gpc}. Then, there exists constants \(C>0\) and \(s > 0\) such that
\[
|\hat{v}_k(t)| \le C k^{-s}, \quad k \in \mathbb{N},
\]
for all \(t \in [0,T]\).
\end{lemma}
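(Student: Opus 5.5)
The plan is to combine the Sobolev regularity from Lemma~\ref{yregularity} with the classical spectral decay of gPC coefficients for functions in \(\sob\), and then check that the resulting constant does not depend on \(t\).

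\textbf{Step 1: decay for fixed \(t\).} For each fixed \(t\in[0,T]\), Lemma~\ref{yregularity} gives \(y(t,\cdot)\in\sob\). For the classical families of the Askey scheme (Jacobi, Legendre, Chebyshev, Gegenbauer), the orthonormal polynomials are eigenfunctions of a second-order Sturm--Liouville operator
\[
\cl L\Phi_k = -\f{1}{\rho}\dv{x}\Big(p\,\dv{\Phi_k}{x}\Big)=\lambda_k\Phi_k, \qquad \lambda_k\sim k^2 .
\]
Writing \(\hat v_k(t)=\langle y(t,\cdot),\Phi_k\rangle=\lambda_k^{-m}\langle \cl L^m y(t,\cdot),\Phi_k\rangle\), I will integrate by parts \(m\) times. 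The boundary terms vanish because \(p\) vanishes at the endpoints of \(\Xi\). This yields
\[
|\hat v_k(t)|\le \lambda_k^{-m}\norm{\cl L^m y(t,\cdot)}_{\hil}\le C_1 k^{-2m}\norm{y(t,\cdot)}_{\cl H^{2m}_\rho},
\]
and by interpolation \(|\hat v_k(t)|\le C_1k^{-s}\norm{y(t,\cdot)}_{\sob}\). The only thing needed is some positive exponent, so I can accept a loss in the power: using only the orthonormality and \(\sum_k|\hat v_k|^2k^{2s'}\lesssim\norm{y}^2_{\cl H^{s'}}\), even a weaker exponent suffices.

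\textbf{Step 2: uniformity in \(t\).} It remains to show \(\sup_{t\in[0,T]}\norm{y(t,\cdot)}_{\sob}<\oo\). I will revisit the proof of Lemma~\ref{yregularity} quantitatively:
\[
y(t,\xi)=y_0(\xi)E_{\alpha,1}(a(\xi)t^\alpha)+c_1(\xi)\,t\,E_{\alpha,2}(a(\xi)t^\alpha).
\]
Since \(|a|\le C_a\) and \(t\le T\), the arguments \(a(\xi)t^\alpha\) lie in the compact set \([-C_aT^\alpha,C_aT^\alpha]\). On this set, every derivative of the entire functions \(E_{\alpha,1},E_{\alpha,2}\) is bounded. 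By Faà di Bruno, the \(\cl W^{s,\oo}\)-norm of \(\xi\mapsto E_{\alpha,\beta}(a(\xi)t^\alpha)\) is then bounded by a constant depending only on \(\norm{a}_{\cl W^{s,\oo}}\), \(T\), \(\alpha\) and \(s\), uniformly in \(t\). The multiplication estimate \(\norm{fg}_{\sob}\le C\norm{f}_{\cl W^{s,\oo}}\norm{g}_{\sob}\) then bounds both terms uniformly. Here \(c_1\) is independent of \(t\) and lies in \(\sob\) by Lemma~\ref{yregularity}, with \(1/E_{\alpha,2}(a(\cdot)T^\alpha)\in\cl W^{s,\oo}\) because the denominator stays away from zero on the compact range. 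Setting \(C=C_1\sup_t\norm{y(t,\cdot)}_{\sob}\) gives the claim.

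\textbf{Main obstacle.} The main obstacle is Step 1 for a general weight \(\rho\). The integration-by-parts and Sturm--Liouville structure holds for the classical Askey families, but not for an arbitrary density with a complete polynomial basis. I would therefore state the argument for the classical weights considered in the Remark following Lemma~\ref{phinbound}, citing the standard spectral approximation estimate (e.g.\ Canuto et al.) for the inequality \(\sum_{k>n}|\hat v_k|^2\le Cn^{-2s}\norm{y}^2_{\sob}\). The pointwise bound then follows, with \(k^{-s}\) giving the stated decay.

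A second subtlety is the nonvanishing of \(E_{\alpha,2}(a(\xi)T^\alpha)\) for \(\alpha\in(1,2)\) and \(a>0\). This follows because the series has all positive terms when \(a>0\).
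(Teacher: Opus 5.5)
Your proposal is correct and follows essentially the same route as the paper: a cited spectral tail estimate for functions in \(\sob\) at each fixed \(t\) (the paper uses Xiu's Theorem 3.6, you use Canuto et al.), a uniform-in-\(t\) bound on \(\norm{y(t,\cdot)}_{\sob}\), and then the step from the tail sum to the pointwise bound. The only real difference is how you get the uniform bound: you estimate it directly, via Fa\`a di Bruno on the compact range of \(a(\xi)t^\alpha\) and \(E_{\alpha,2}\ge 1\), whereas the paper argues continuity of \(t\mapsto y(t,\cdot)\) in \(\sob\) and uses compactness of \([0,T]\); you are also more explicit that the spectral estimate is only available for the classical weights.
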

\begin{proof}
In view of \cite[Theorem 3.6]{xiu2010numerical}, there exists \(K > 0\), independent of \(n\), such that
\[\sum_{k=n}^\oo \hat{v}_k(t)^2 
\leq Kn^{-2s}\norm{y(t, \cdot)}_{\sob}^2 \]
for all \(t \in [0, T]\).

Now, for any fixed \(\xi\) and \(t \in [0,T]\), the map \(t \mapsto E_{\alpha,\beta}(a(\xi)t^\alpha)\) is continuous for any $\alpha,\beta>0$. Moreover, in view of \eqref{sol}, the solution \(y(t,\xi)\) can be expressed as a linear combination of terms involving multiplication by functions of the form \(E_{\alpha,\beta}(a(\xi)t^\alpha)\). Since \(a \in \cl W^{s,\infty}(\Xi)\) and the Mittag–Leffler functions are smooth, it follows that these multipliers belong to \(\cl W^{s,\infty}(\Xi)\) and depend continuously on \(t\). Hence, using Lemma \ref{yregularity} and the continuity of multiplication in Sobolev spaces, we obtain that
$
t \mapsto y(t,\cdot) \in \sob
$
is continuous on \([0,T]\). Consequently, the map \(t \mapsto \|y(t,\cdot)\|_{\sob}\) is continuous on the compact interval \([0,T]\), and therefore
$
y \in C([0,T]; \sob).
$
In particular, we have
\[
M := \sup_{t \in [0,T]} \|y(t,\cdot)\|_{\sob} < \infty.
\]

Then, it follows that for all \(k \in \bb N\)
\[\hat{v}_k(t)^2 \leq \sum_{j=k}^{\infty}\hat{v}_j(t)^2 \leq KM^2k^{-2s}.\]
Finally, choosing \(C = \sqrt{K}M\), we get,
\[\abs{\hat{v}_k(t)} \leq Ck^{-s},\]
uniformly for \(t \in [0, T]\).
\end{proof}

We now present the proof for uniform convergence of the gPC expansion.
\begin{lemma}\label{uniconv}
Under the hypotheses \emph{(H1)--(H2)}, the gPC expansion \labelcref{gpc}
\[y(t, \xi) = \sum_{k=0}^\oo \hat{v}_k(t) \Phi_k(\xi)\]
converges uniformly in \(t \in [0, T]\) for each \(\xi \in \Xi\). Furthermore, the series
of the term-by-term Caputo derivatives converges uniformly in \(t \in [0, T]\) to the Caputo derivative of the solution, i.e.,
\[\D y(t, \xi) = \sum_{k=0}^\oo (\D \hat{v}_k(t))\Phi_k(\xi).\]
\end{lemma}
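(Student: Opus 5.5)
The plan is to use the Weierstrass M-test for the series itself, and then to obtain the statement for the Caputo derivatives from the differential equation rather than by differentiating term by term.

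\emph{Step 1: uniform convergence of the series.} Fix \(\xi \in \Xi\). For \(k \ge 1\), Lemma~\ref{auxiliary} and Lemma~\ref{phinbound} give
\[
\abs{\hat v_k(t)\Phi_k(\xi)} \le C C' k^{\delta - s}
\]
uniformly in \(t \in [0,T]\). Since (H1) gives \(s > \delta + 2\), we have \(\delta - s < -2 < -1\), so the majorant series is summable. The M-test then yields uniform convergence on \([0,T]\). The limit equals \(y(t,\xi)\) because the partial sums converge to \(y(t,\cdot)\) in \(\hil\) by \labelcref{trueconv}. The uniform limit is continuous in \(\xi\) (on the support of \(\rho\)), and \(y\) is continuous by \labelcref{sol}, so the \(\cl L^2_\rho\) limit and the pointwise limit coincide.

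\emph{Step 2: Caputo derivatives of the coefficients.} Project \labelcref{fde} onto \(\Phi_k\). Since \(y(\cdot,\xi) \in \cl{AC}^2[0,T]\) with bounds uniform in \(\xi\) (by compactness of \(\Xi\) and boundedness of \(a\)), the Caputo derivative commutes with the integral over \(\Xi\) by Fubini. This gives
\[
\D \hat v_k(t) = \langle \D y(t,\cdot), \Phi_k\rangle = \langle a\, y(t,\cdot), \Phi_k\rangle =: \hat w_k(t).
\]
Thus the term-by-term derivative series is exactly the gPC expansion of \(a(\cdot)y(t,\cdot)\). By (H2) and the multiplication property already used in Lemma~\ref{yregularity}, \(a\,y(t,\cdot) \in \sob\), and \(t \mapsto a\,y(t,\cdot)\) is continuous into \(\sob\). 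The argument of Lemma~\ref{auxiliary} then gives \(\abs{\hat w_k(t)} \le \tilde C k^{-s}\) uniformly in \(t\).

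\emph{Step 3: uniform convergence of the derivative series.} Repeating the M-test of Step 1 shows that \(\sum_k \hat w_k(t)\Phi_k(\xi)\) converges uniformly in \(t\) to \(a(\xi)y(t,\xi) = \D y(t,\xi)\). This is exactly the claimed identity.

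The step I expect to be the main obstacle is the justification of the interchange \(\D \hat v_k = \langle \D y, \Phi_k\rangle\). This requires differentiability in \(t\) of \(y\) with integrable second derivative, dominated uniformly in \(\xi\). From \labelcref{sol} one has \(\partial_t^2 y\) involving \(t^{\alpha-2}E_{\alpha,\alpha-1}(a t^\alpha)\). This is weakly singular at \(t=0\) but integrable since \(\alpha > 1\), and the Mittag-Leffler factors and \(c_0, c_1\) are bounded on compact \(\Xi\). The bound on \(c_1\) relies on \(E_{\alpha,2}(aT^\alpha) \ne 0\), as assumed in Lemma~\ref{yregularity}. If the Fubini route proves awkward, the fallback is to write \(\D = I^{2-\alpha}\partial_t^2\) and bound \(\sum_k \abs{\hat v_k''(t)}\abs{\Phi_k(\xi)}\) by an \(L^1\)-integrable majorant in \(t\), using the same decay argument. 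Integrability of the majorant suffices to pass \(I^{2-\alpha}\) through the sum.
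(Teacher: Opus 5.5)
Your proposal is correct and takes essentially the same route as the paper. It uses the Weierstrass M-test with Lemmas~\ref{phinbound} and \ref{auxiliary} and $\delta - s < -1$. It then applies a Leibniz/Fubini interchange to get $\D \hat v_k(t) = \langle a\,y(t,\cdot),\Phi_k\rangle$, applies the decay estimate of Lemma~\ref{auxiliary} to $a\,y(t,\cdot)$, and finishes with a second M-test. Beyond the paper, you identify the pointwise limits with $y(t,\xi)$ and $a(\xi)y(t,\xi)=\D y(t,\xi)$ via $\cl L^2_\rho$ convergence plus continuity, and you justify Fubini by checking the integrable $t^{\alpha-2}$ singularity of $\partial_t^2 y$; the paper leaves both points implicit.
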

\begin{proof}
To prove the uniform convergence of the gPC expansion on \([0,T]\), we employ the Weierstrass M-test. Fixing \(\xi \in \Xi\) and considering the supremum of the \(k\)th term in the expansion over \(t\):
\[\sup_{t \in [0, T]} \abs{\hat{v}_k(t)\Phi_k(\xi)} = \abs{\Phi_k(\xi)} \sup_{t \in [0, T]} \abs{\hat{v}_k(t)}.\]
In view of Lemma \(\ref{auxiliary}\), there exists \(C_1 > 0\) such that \(\abs{\hat{v}_k(t)} \leq C_1k^{-s}\) uniformly for \(t \in [0, T]\). Moreover, by Lemma \(\ref{phinbound}\), \(\abs{\Phi_k(\xi)} \leq \norm{\Phi_k}_\oo \leq C_2k^\delta\) for some \(C_2 > 0\). Thus,
\[\sup_{t \in [0, T]} \abs{\hat{v}_k(t)\Phi_k(\xi)} \leq C_2k^\delta \cdot C_1k^{-s} = C_1C_2k^{\delta - s}.\]
Thus, it holds that the gPC expansion converges uniformly on \([0,T]\) as \(\delta-s<-1\) due to (H1).

We now proceed to show that the series of the term-by-term Caputo derivatives converges uniformly on \([0,T]\) to the Caputo derivative of the solution. Firstly, recall the FDE \labelcref{fde}:
\[\D y(t, \xi) = a(\xi)y(t, \xi).\]
We know that for any \(k \in \bb N_0\)
\[\D \hat{v}_k(t) = \D \langle y(t, \cdot), \Phi_k\rangle.\]
Using the definition of the inner product, the Leibniz rule and the Fubini theorem, we get
\begin{align*}
    \D \langle y(t, \cdot), \Phi_k \rangle &= \D \int_\Xi y(t, \xi') \Phi_k(\xi') \rho(\xi') \dd{\xi'} \\
    &= \f{1}{\Gamma(2-\alpha)} \int_0^t (t-s)^{1-\alpha} \left( \int_\Xi \pdv[2]{s} y(s, \xi') \Phi_k(\xi') \rho(\xi') \dd{\xi'} \right) \dd{s} \\
    &= \int_\Xi \left( \f{1}{\Gamma(2-\alpha)} \int_0^t (t-s)^{1-\alpha} \pdv[2]{s} y(s, \xi') \dd{s} \right) \Phi_k(\xi') \rho(\xi') \dd{\xi'} \\
    &= \int_\Xi \left( \D y(t, \xi') \right) \Phi_k(\xi') \rho(\xi') \dd{\xi'} \\
    &= \langle \D y(t, \cdot), \Phi_k \rangle \\
    &= \langle a(\cdot)y(t, \cdot), \Phi_k \rangle.
\end{align*}

Therefore, it follows that \(\D \hat{v}_k(t)\) is the \(k\)th coefficient in the gPC expansion of \(g(t, \xi) = a(\xi)y(t, \xi)\).
Thus, by applying Lemma 3 for the function \(g(t, \cdot)\), we get 
\[\sup_{t \in [0, T]} \abs{\D \hat{v}_k(t)} \leq C_3k^{-s}\]
for some constant \(C_3>0\). Therefore,
\[\sup_{t \in [0, T]} \abs{(\D \hat{v}_k(t)) \Phi_k(\xi)} \leq C_2C_3k^{\delta - s}.\]
Again, since \(\delta - s < -1\), the sequence of bounds is summable. Consequently, the series of term-by-term Caputo derivatives converges uniformly on \([0,T]\).
\end{proof}

The following lemma was stated as assumption (A3) in \cite[Section 3.1]{pulch2025stochastic}, which we can now derive as a consequence of Lemma \ref{auxiliary}.
\begin{lemma}\label{superlinear}
The coefficients of the gPC expansion \labelcref{gpc} satisfy the following property:
\[\lim_{n \to \oo} \max_{t \in [0, T]} (n+1)\sum_{k=n+1}^\oo \abs{\hat{v}_k(t)} = 0.\]
\end{lemma}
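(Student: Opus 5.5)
The plan is to obtain the statement directly from the uniform decay estimate of Lemma~\ref{auxiliary}. That lemma gives a constant \(C>0\), independent of \(t\), with \(\abs{\hat{v}_k(t)} \le C k^{-s}\) for all \(k \in \bb N\) and \(t \in [0,T]\). Here \(s\) is the Sobolev index from (H1), so \(s > \delta + 2 \ge 2\). Since the bound holds uniformly in \(t\), the maximum over \([0,T]\) can be moved past the sum:
\[
\max_{t \in [0,T]} (n+1)\sum_{k=n+1}^\oo \abs{\hat{v}_k(t)} \le C (n+1) \sum_{k=n+1}^\oo k^{-s}.
\]
The right-hand side no longer depends on \(t\). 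The problem therefore reduces to a deterministic tail estimate for a \(p\)-series with \(p = s > 1\).

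Next I would bound the tail by the integral comparison. Because \(x \mapsto x^{-s}\) is decreasing, we have \(k^{-s} \le \int_{k-1}^{k} x^{-s}\dd{x}\) for \(k \ge n+1\). Summing gives
\[
\sum_{k=n+1}^\oo k^{-s} \le \int_n^\oo x^{-s}\dd{x} = \f{n^{1-s}}{s-1}
\]
for \(n \ge 1\). Consequently
\[
(n+1)\sum_{k=n+1}^\oo k^{-s} \le \f{C}{s-1}\,(n+1)\,n^{1-s} \le \f{2C}{s-1}\, n^{2-s}.
\]
This tends to zero as \(n \to \oo\) exactly when \(s > 2\).

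The only real issue is to check that the exponent in Lemma~\ref{auxiliary} is large enough. As stated, that lemma only asserts some \(s>0\), which would not suffice. Its proof, however, uses \cite[Theorem 3.6]{xiu2010numerical} with the Sobolev index of Lemma~\ref{yregularity}, which is the index \(s\) of (H1). Hence the decay exponent is the same \(s > \delta + 2 \ge 2\), since \(\delta \ge 0\).

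I would state this identification explicitly and then conclude via the squeeze theorem, because the quantity in the statement is nonnegative. The hypothesis \(s>\delta+2\) is stronger than needed here: only \(s>2\) is used. The extra \(\delta\) is what Lemma~\ref{uniconv} required.
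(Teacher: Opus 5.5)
Your proposal is correct and follows the paper's proof essentially step for step. Both use the \(t\)-uniform bound \(\abs{\hat{v}_k(t)} \le Ck^{-s}\) from Lemma~\ref{auxiliary}, compare the tail with \(\int_n^\oo x^{-s}\dd{x} = n^{1-s}/(s-1)\), and take \(s>2\) from (H1). Your extra remarks make precise two steps the paper leaves implicit: that the exponent in Lemma~\ref{auxiliary} is the Sobolev index of (H1), and that the maximum over \(t\) can be bounded directly by the \(t\)-uniform estimate (the paper argues pointwise in \(t\) and then passes to the maximum).
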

\begin{proof}
By Lemma \ref{auxiliary}, there exist constants \(C > 0\) and \(s > 0\) such that
\[\abs{\hat{v}_k(t)} \leq Ck^{-s}.\]
Thus, we obtain
\[\sum_{k=n+1}^\oo \abs{\hat{v}_k(t)} \leq C\sum_{k=n+1}^\oo k^{-s}.\]
In view of (H1), we have \(s > 2\), and thus, we can bound the summation with the corresponding integral to get
\[\sum_{k=n+1}^\oo k^{-s} \leq \int_n^\oo x^{-s}\dd{x} = \f{n^{-(s-1)}}{s-1}.\]
Hence,
\[0 \leq (n+1)\sum_{k=n+1}^\oo \abs{\hat{v}_k(t)} \leq (n+1) \cdot \f{Cn^{-(s-1)}}{s-1}.\]
By the Squeeze Theorem \cite[Theorem 3.2.7]{bartle2011introduction},
\[\lim_{n \to \oo} (n+1) \sum_{k=n+1}^\oo \abs{\hat{v}_k(t)} = 0, \quad \forall t\in[0,T],\]
and as such,
\[ \lim_{n \to \oo} \max_{t \in [0, T]} (n+1)\sum_{k=n+1}^\oo \abs{\hat{v}_k(t)} = 0. \]
\end{proof}

Finally, before embarking on the convergence analysis of the proposed method, we provide the following lemmata from \cite[Section 3.1]{pulch2025stochastic}.

\begin{lemma}\label{abound}
Given (H2) and a constant \(C_a > 0\), the entries of the matrix \(a_{ij}\) in the system \labelcref{sysmat}--\labelcref{sysbound} satisfy \(\abs{a_{ij}} \leq C_a\) for all \(i, j \in \{0, 1, 2, \dots , n\}\).
\end{lemma}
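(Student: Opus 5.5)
The bound follows from the Cauchy--Schwarz inequality in \(\hil\) together with the orthonormality of the basis. By definition,
\[a_{ij} = \langle a\Phi_i, \Phi_j\rangle = \int_\Xi a(\xi')\Phi_i(\xi')\Phi_j(\xi')\rho(\xi')\dd{\xi'}.\]
Take absolute values inside the integral and use (H2), which gives \(\abs{a(\xi')} \leq C_a\) for \(\rho\)-almost every \(\xi'\in\Xi\). This yields
\[\abs{a_{ij}} \leq C_a \int_\Xi \abs{\Phi_i(\xi')}\,\abs{\Phi_j(\xi')}\,\rho(\xi')\dd{\xi'} = C_a \langle \abs{\Phi_i}, \abs{\Phi_j}\rangle.\]

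Next apply the Cauchy--Schwarz inequality in \(\hil\) to the functions \(\abs{\Phi_i}\) and \(\abs{\Phi_j}\). Both lie in \(\hil\) because \(\norm{\abs{\Phi_k}}_{\hil} = \norm{\Phi_k}_{\hil}\). Hence
\[\langle \abs{\Phi_i}, \abs{\Phi_j}\rangle \leq \norm{\Phi_i}_{\hil}\norm{\Phi_j}_{\hil} = 1 \cdot 1 = 1,\]
where the last step uses orthonormality. Therefore \(\abs{a_{ij}} \leq C_a\) for all \(i,j \in \{0,1,\dots,n\}\), and the constant is independent of \(n\).

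The only point needing care is that the essential bound on \(a\) is taken with respect to the measure \(\rho(\xi')\dd{\xi'}\), that is, almost everywhere on the support of \(\rho\). This is exactly the statement of (H2), since \(a\in\cl W^{s,\infty}(\Xi)\subset \cl L^\infty(\Xi)\) and \(\rho\) is a density, so null sets for Lebesgue measure are null for \(\rho\). None of the higher regularity of \(a\) or Lemma~\ref{phinbound} is needed. Apart from this, the argument is immediate.
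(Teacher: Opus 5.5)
Your proof is correct. Bounding $|a|\le C_a$ almost everywhere, which transfers to the measure $\rho(\xi')\,d\xi'$ by absolute continuity as you note, and then applying Cauchy--Schwarz with $\|\Phi_i\|_{\mathcal{L}^2_\rho(\Xi)}=\|\Phi_j\|_{\mathcal{L}^2_\rho(\Xi)}=1$ gives $|a_{ij}|\le C_a$ uniformly in $n$. The paper gives no proof of its own and simply imports this lemma from Pulch and Singh, and your Cauchy--Schwarz-plus-orthonormality argument is the standard proof of that fact, so there is nothing to add.
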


\begin{lemma}\label{lambdabound}
Given (H2) and a constant \(C_a > 0\), each eigenvalue \(\lambda\) of the matrix in the system \labelcref{sysmat}--\labelcref{sysbound} satisfies \(\abs{\lambda} < C_a\).
\end{lemma}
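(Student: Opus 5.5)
The matrix \(A\) in \labelcref{sysmat} is symmetric, since \(a_{ij} = \langle a\Phi_i, \Phi_j\rangle = \langle a\Phi_j, \Phi_i\rangle\). Its eigenvalues are therefore real, and they can be controlled through the Rayleigh quotient. The plan is to take an eigenpair \((\lambda, \vb x)\) with \(\vb x = [x_0, \dots, x_n]^\top \neq 0\) normalised so that \(\sum_{i=0}^n x_i^2 = 1\). With it we form the polynomial \(p(\xi) \coloneqq \sum_{i=0}^n x_i \Phi_i(\xi)\). Orthonormality of \(\{\Phi_i\}\) in \(\hil\) gives \(\norm{p}_{\hil}^2 = \sum_i x_i^2 = 1\). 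Expanding the quadratic form using bilinearity of the inner product gives
\[
\lambda = \vb x^\top A \vb x = \sum_{i,j=0}^n x_i x_j \langle a\Phi_i, \Phi_j\rangle = \int_\Xi a(\xi') p(\xi')^2 \rho(\xi') \dd{\xi'}.
\]

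The second step bounds this integral using (H2), namely \(\abs{a(\xi')} \le C_a\) for almost every \(\xi'\). This yields \(\abs{\lambda} \le C_a \int_\Xi p^2 \rho \dd{\xi'} = C_a\). The argument shows \(\lambda\) is a weighted average of \(a\) with respect to the probability density \(p^2\rho\).

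The main obstacle is upgrading the bound to the \emph{strict} inequality \(\abs{\lambda} < C_a\) stated in the lemma. I would argue by contradiction. Suppose \(\lambda = C_a\), the case \(\lambda = -C_a\) being symmetric. Then \(\int_\Xi (C_a - a) p^2 \rho \dd{\xi'} = 0\) with a nonnegative integrand, so \(a = C_a\) almost everywhere on the set where \(p^2\rho > 0\). Since \(p\) is a nonzero polynomial it has only finitely many zeros, so this set has full \(\rho\)-measure. This would force \(a \equiv C_a\) \(\rho\)-a.e., i.e.\ the coefficient is essentially constant at its bound.

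To exclude that degenerate case I would read (H2) as saying that \(C_a\) is a strict essential bound. This is how the constant is used in \cite[Section 3.1]{pulch2025stochastic}, and it is harmless because \(C_a\) may always be enlarged slightly. Alternatively one proves the non-strict bound \(\abs{\lambda} \le C_a\) and applies it with \(C_a\) replaced by \(C_a + \varepsilon\). Either way the strict inequality follows. This is also all that is needed later, since the eigenvalue bound is only used to control \(E_{\alpha,\beta}(\lambda t^\alpha)\) uniformly in \(n\).
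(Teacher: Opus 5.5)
Your argument is correct. The paper gives no proof of its own here: it imports the lemma from \cite[Section 3.1]{pulch2025stochastic}. Your identity \(\lambda = \vb x^\top A \vb x = \int_\Xi a\,p^2\rho\,\dd{\xi'}\) with \(\norm{p}_{\hil}=1\) is the standard way to prove such enclosures, and it rigorously gives \(\abs{\lambda}\le C_a\). It also gives the sharper enclosure \(\lambda\in[\operatorname{ess\,inf} a,\operatorname{ess\,sup} a]\), which the paper only observes numerically in Section 4 (spectra in \([2,3]\) and \([-1,1]\)).

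Your treatment of strictness is also right, and you can state it more bluntly: under the literal reading \(\abs{a}\le C_a\) a.e.\ of (H2), the strict inequality is false. For \(a\equiv C_a\), which satisfies (H2) and positivity, orthonormality gives \(A=C_a I_{n+1}\), so every eigenvalue equals \(C_a\). Your contradiction argument shows this is the only obstruction, since \(\lambda=\pm C_a\) forces \(a\equiv\pm C_a\) \(\rho\)-a.e.

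Reinterpreting (H2) is therefore necessary, not merely convenient. Your \(C_a+\varepsilon\) route changes the constant rather than proving the inequality as stated. Reading (H2) as a strict pointwise bound \(\abs{a}<C_a\) a.e.\ does work with your argument. This does not affect the main theorem, whose proof only uses \(\abs{\lambda_k t^\alpha}\le C_aT^\alpha\).
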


\subsection{Convergence Analysis}\label{converge}
We now proceed to show the convergence of the solution satisfying the Galerkin system \labelcref{sysmat}--\labelcref{sysbound} to the solution of the FBVP \eqref{fde}--\eqref{bound}.

\begin{theorem}
Let \(y(t,\xi)\) be the solution of FBVP \eqref{fde}--\eqref{bound}, whose exact gPC expansion is given by \cref{gpc} and let \(\tilde{y}^{(n)}(t,\xi)\) be the \((n+1)\)th term gPC solution generated by the Galerkin system \labelcref{sysmat}--\labelcref{sysbound}. Then, it holds that
\[\lim_{n \to \oo} \norm{y - \tilde{y}^{(n)}}_{C([0,T];\hil)} = \lim_{n \to \oo} \max_{t \in [0,T]} \norm{y(t,\cdot) - \tilde{y}^{(n)}(t,\cdot)}_{\hil} = 0.\]
\end{theorem}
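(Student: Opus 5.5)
Split the error into a truncation part and a Galerkin part. By Parseval,
\[
\norm{y(t,\cdot)-\tilde y^{(n)}(t,\cdot)}_{\hil}^2=\sum_{k=0}^n(\hat v_k(t)-v_k(t))^2+\sum_{k=n+1}^\oo \hat v_k(t)^2 .
\]
The tail is at most \(KM^2(n+1)^{-2s}\) uniformly in \(t\) by the estimate in the proof of Lemma~\ref{auxiliary}, so it tends to zero. It therefore suffices to show \(\max_t\norm{\hat{\vb v}^{(n)}(t)-\vb v^{(n)}(t)}_2\to0\), where \(\hat{\vb v}^{(n)}=[\hat v_0,\dots,\hat v_n]^\top\).

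Next derive the equation satisfied by \(\hat{\vb v}^{(n)}\). By Lemma~\ref{uniconv} and its proof, \(\D\hat v_j(t)=\langle a y(t,\cdot),\Phi_j\rangle=\sum_{k=0}^\oo \hat v_k(t)\langle a\Phi_k,\Phi_j\rangle\); the interchange is justified since the gPC series of \(y(t,\cdot)\) converges in \(\hil\) and \(a\) is bounded by (H2). Hence
\[
\D\hat{\vb v}^{(n)}=A\hat{\vb v}^{(n)}+\vb r^{(n)}(t),\qquad r_j^{(n)}(t)=\sum_{k=n+1}^\oo \hat v_k(t)a_{kj}.
\]
The boundary values coincide with \eqref{sysbound}. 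So the error \(\vb e=\hat{\vb v}^{(n)}-\vb v^{(n)}\) solves the linear FBVP
\[
\D\vb e=A\vb e+\vb r^{(n)},\qquad \vb e(0)=\vb e(T)=0 .
\]
Lemma~\ref{abound} gives \(|r_j^{(n)}(t)|\le C_a\sum_{k>n}|\hat v_k(t)|\), hence \(\norm{\vb r^{(n)}(t)}_2\le C_a\sqrt{n+1}\sum_{k>n}|\hat v_k(t)|\). By Lemma~\ref{superlinear} this tends to zero uniformly in \(t\), with the spare factor \(\sqrt{n+1}\) still available.

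Represent \(\vb e\) by the variation-of-constants formula with matrix Mittag-Leffler functions. Since \(A\) is symmetric, \(A=Q\Lambda Q^\top\) with \(Q\) orthogonal, so the problem decouples into scalar FBVPs \(\D w=\lambda w+g\), \(w(0)=w(T)=0\), with \(g=(Q^\top\vb r^{(n)})_i\). For each scalar problem,
\[
w(t)=c\,tE_{\alpha,2}(\lambda t^\alpha)+\int_0^t(t-\tau)^{\alpha-1}E_{\alpha,\alpha}(\lambda(t-\tau)^\alpha)g(\tau)\dd\tau,
\]
where \(c\) is fixed by \(w(T)=0\):
\[
c=-\frac{\int_0^T(T-\tau)^{\alpha-1}E_{\alpha,\alpha}(\lambda(T-\tau)^\alpha)g(\tau)\dd\tau}{TE_{\alpha,2}(\lambda T^\alpha)} .
\]
By Lemma~\ref{lambdabound}, \(\lambda\in[-C_a,C_a]\), a compact set. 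On it the Mittag-Leffler functions are bounded, and \(E_{\alpha,2}(\lambda T^\alpha)\) is continuous in \(\lambda\). This yields a bound \(|w(t)|\le L\max_\tau|g(\tau)|\), and since \(Q\) is orthogonal, \(\max_t\norm{\vb e(t)}_2\le L'\max_t\norm{\vb r^{(n)}(t)}_2\). The constant \(L'\) is independent of \(n\), and the right-hand side tends to zero, completing the proof.

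\textbf{Main obstacle.} The hard step is the uniform lower bound \(|E_{\alpha,2}(\lambda T^\alpha)|\ge c_0>0\) for all \(\lambda\in[-C_a,C_a]\), i.e.\ unique solvability of the scalar BVPs uniformly in \(n\). For \(\lambda\ge0\) this is immediate, since \(E_{\alpha,2}\ge 1/\Gamma(2)=1\). For negative \(\lambda\), \(E_{\alpha,2}\) with \(\alpha\in(1,2)\) has real zeros. The approach is to use that the exact problem is implicitly assumed well posed, \(TE_{\alpha,2}(a(\xi)T^\alpha)\neq0\) as in Lemma~\ref{yregularity}, and to argue that the eigenvalues of \(A\) lie in the closed convex hull of the essential range of \(a\). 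If \(E_{\alpha,2}(zT^\alpha)\) is bounded away from zero on that interval, which holds when \(a\) takes values where \(E_{\alpha,2}\) does not vanish, compactness and continuity give \(c_0\). If this cannot be derived, it will be stated explicitly as the natural nondegeneracy hypothesis accompanying \(c_1\) in \eqref{sol}.
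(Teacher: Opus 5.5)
Your proposal is correct and follows the paper's proof essentially step by step: tail splitting, the error FBVP $\D\vb e=A\vb e+\hat{\vb R}$ with $\vb e(0)=\vb e(T)=\vb 0$, decoupling via $A=P\Lambda P^\top$, uniform Mittag-Leffler bounds, then Lemma~\ref{abound} and Lemma~\ref{superlinear}. The paper spends your spare $\sqrt{n+1}$ through $\norm{\vb e(t)}_2\le\sqrt{n+1}\norm{\vb d(t)}_\infty$, which is the safer route, because an $n$-independent $L'$ needs a Minkowski-type argument with a $\lambda$-uniform kernel bound, not just orthogonality of $Q$.

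The obstacle you flag is genuine, and the paper passes over it by placing $\min_{\abs{t}\le C_aT^\alpha}\abs{E_{\alpha,2}(t)}$ in the denominator of $K$ without checking that it is positive. Under the paper's standing assumption $a:\Xi\to\bb R^+$, your convex-hull idea settles it: $\vb x^\top A\vb x=\langle a p,p\rangle\ge0$ with $p=\sum_i x_i\Phi_i$ puts the spectrum in $[0,C_a]$, where $E_{\alpha,2}\ge1$.
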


\begin{proof}
We begin by applying the triangle inequality to the error norm in the Hilbert space \(\hil\), using the truncated gPC expansion \labelcref{trunc} and the approximate solution \labelcref{apx}:
\[\norm{y(t, \cdot) - \tilde{y}^{(n)}(t, \cdot)}_{\hil} \leq \norm{y(t, \cdot) - \hat{y}^{(n)}(t, \cdot)}_{\hil} + \norm{\hat{y}^{(n)}(t, \cdot) - \tilde{y}^{(n)}(t, \cdot)}_{\hil}.\]
By Lemma \labelcref{uniconv} the gPC expansion converges uniformly for \(t \in [0, T]\):
\[\lim_{n \to \infty} \max_{t \in [0, T]} \norm{y(t, \cdot) - \hat{y}^{(n)}(t, \cdot)}_{\hil} = 0.\] Consequently, it suffices to demonstrate that the second term, representing the error between the projection of the exact solution and the Galerkin approximation, vanishes, i.e.,
\[\lim_{n \to \infty} \norm{\hat{y}^{(n)}(t, \cdot) - \tilde{y}^{(n)}(t, \cdot)}_{\hil} = 0\]
uniformly for all \(t \in [0, T]\).

In view of Parseval's identity, it follows that
\[\norm{\hat{y}^{(n)}(t, \cdot) - \tilde{y}^{(n)}(t, \cdot)}^2_{\hil} = \norm{\sum_{k=0}^n (\hat{v}_k(t) - v_k(t))\Phi_k(\cdot)}_{\hil}^2 = \sum_{k=0}^n (\hat{v}_k(t) - v_k(t))^2.\]
Let \(e_k(t) = \hat{v}_k(t) - v_k(t)\) denote the error in the \(k\)th coefficient, and define the error vector \(\vb e(t) = [e_0(t) \; e_1(t) \; \cdots \; e_n(t)]^\top\). Then, we must show that \(\norm{\vb e(t)}_2 \to 0\) as \(n\to \infty\). Inserting the exact gPC expansion into the FDE \labelcref{fde} yields
\[ \D \left( \sum_{k=0}^\oo \hat{v}_k(t) \Phi_k(\xi) \right) = a(\xi) \sum_{k=0}^\oo \hat{v}_k(t) \Phi_k(\xi). \]
As established in Lemma~\ref{uniconv}, the series of term-by-term Caputo derivatives converges uniformly. This permits the interchange of the fractional derivative operator and the infinite summation. Taking the inner product of both sides with an arbitrary basis polynomial \(\Phi_j\) on \(\hil\) and interchanging the summation and the inner product (which holds since the series converges in \(\hil\)), we obtain
\[ \sum_{k=0}^\oo (\D \hat{v}_k(t)) \langle \Phi_k, \Phi_j \rangle = \sum_{k=0}^\oo \hat{v}_k(t) \langle a \Phi_k, \Phi_j \rangle. \]
Applying the orthonormality of the basis polynomials, \(\langle \Phi_k, \Phi_j \rangle = \delta_{kj}\), the left-hand side simplifies, resulting in the relation
\[\D \hat{v}_j(t) = \sum_{k=0}^\infty a_{kj}\hat{v}_k(t),\]
where \(a_{kj} = \langle a \Phi_k, \Phi_j \rangle\). Applying the derivative operator to the error equation and substituting the relation above yields
\begin{align*}\D e_j(t)& = \sum_{k=0}^\infty a_{jk}\hat{v}_k(t) - \sum_{k=0}^n a_{kj}v_{k}(t)\\
&= \sum_{k=0}^n a_{jk}e_k(t) + \sum_{k=n+1}^\infty a_{jk}\hat{v}_k(t).
\end{align*}
We define the vector \(\hat{\vb R}(t) = [\hat{R}_0(t) \; \hat{R}_1(t) \; \cdots \; \hat{R}_n(t)]^\top\), with components given by
\[\hat{R}_j(t) = \sum_{k=n+1}^\infty a_{jk}\hat{v}_k(t).\]
Consequently, the system governing the error vector can be written as
\[\D \vb e(t) = A \vb e(t) + \hat{\vb R}(t).\]
Since the matrix \(A\) is symmetric, it admits the spectral decomposition \(A = P\Lambda P^\top\). We introduce the transformed error vector
\[\vb d(t) = P^\top \vb e(t).\]
Substituting this transformation into the system yields
\[\D \vb d(t) = \Lambda \vb d(t) + P^\top\hat{\vb R}(t).\]
Since \(\Lambda\) is a diagonal matrix, the system decouples into \(n+1\) independent scalar FDEs:
\[\D d_k(t) = \lambda_k d_k(t) + R_k(t),\]
where \(\vb R(t) = P^\top \hat{\vb R}(t)\). The general solution to this differential equation is expressed in terms of Mittag-Leffler functions as
\[d_k(t) = c_{k,1}E_{\alpha, 1}(\lambda_kt^\alpha) + c_{k,2}tE_{\alpha, 2}(\lambda_k t^\alpha) + \int_0^t (t - \tau)^{\alpha - 1}E_{\alpha, \alpha} (\lambda_k(t - \tau)^\alpha)R_k(\tau)\dd{\tau}.\]
Imposing the homogeneous boundary conditions \(\vb d(0) = P^\top \vb e(0) = \vb 0\) and \(\vb d(T) = P^\top \vb e(T) = \vb 0\), we determine the constants:
\[c_{k,1} = 0, \quad c_{k,2} = -\f{1}{TE_{\alpha, 2}(\lambda_kT^\alpha)} \int_0^T (T - \tau)^{\alpha - 1} E_{\alpha, \alpha} (\lambda_k(T - \tau)^\alpha)R_k(\tau)\dd{\tau}.\]
We now proceed to derive a bound for the function \(d_k\). By (H2), the function \(a\) is essentially bounded by the constant \(C_a > 0\). Thus, it follows that the eigenvalues satisfy \(\abs{\lambda_k(t - \tau)^\alpha} \leq C_a T^\alpha\) for all \(\tau \in [0, t]\) and \(t \in (0, T]\) by Lemma \ref{lambdabound}. As the function \(E_{\alpha, \alpha}\) is continuous, we define
\[C = \max_{\abs{x} \leq C_a T^\alpha} \abs{E_{\alpha, \alpha}(x)}.\]
The integral term is bounded as follows:
\begin{align*}
\abs{\int_0^t (t - \tau)^{\alpha - 1}E_{\alpha, \alpha}(\lambda_k(t - \tau)^\alpha)R_k(\tau)\dd{\tau}} &\leq \int_0^t \abs{(t - \tau)^{\alpha - 1}E_{\alpha, \alpha}(\lambda_k(t - \tau)^\alpha)R_k(\tau)}\dd{\tau}\\
&\leq \f{t^\alpha}{\alpha} \cdot C \cdot \norm{R_k}_\infty \le C\f{T^{\alpha}}{\alpha}\norm{R_k}_\infty.
\end{align*}
Similarly, the numerator of \(c_{k,2}\) corresponds to the integral evaluated at \(t=T\), and thus, we get
\[\abs{c_{k,2}tE_{\alpha,2}(\lambda_kt^\alpha)} \leq \f{1}{\abs{E_{\alpha, 2}(\lambda_kT^\alpha)}} \cdot C\cdot \f{T^\alpha}{\alpha} \norm{R_k}_{\infty} \cdot \max_{t \in [0, T]} \abs{E_{\alpha, 2}(\lambda_k t^\alpha)}.\]
Combining these results, the bound for \(d_k(t)\) is given by
\[|d_k(t)| \leq C \f{T^\alpha}{\alpha} \left ( 1 + \f{\max_{t \in [0,T]}|E_{\alpha, 2}(\lambda_kt^\alpha)|}{|E_{\alpha, 2}(\lambda_kT^\alpha)|}\right ) \norm{R_k}_\infty.\]
Define a uniform constant \(K\):
\[K \coloneqq \f{T^\alpha}{\alpha}\left ( 1 + \f{\max_{\abs{t} \leq C_aT^\alpha} \abs{E_{\alpha, 2} (t)}}{\min_{\abs{t} \leq C_aT^\alpha} \abs{E_{\alpha, 2} (t)}}\right ). \]
Note that \(K\) is independent of \(\lambda_k\) since \(\abs{\lambda_kt^{\alpha}} \leq C_aT^\alpha\). Therefore, it holds that
\[|d_k(t)| \leq K\norm{R_k}_\infty.\]
We now bound the residual vector. Since \(P\) is orthogonal, \(P^\top P = I\), implying that the Euclidean norm of each column vector \(\vb p_i\) is unity. By the Cauchy-Schwarz inequality in Euclidean space,
\[|R_i(t)| = |\vb p_i^\top \hat{\vb R}(t)| \leq \norm{\vb p_i}_2\norm{\hat{\vb R}(t)}_2 = \left ( \sum_{\ell = 0}^n \left (\sum_{k=n+1}^\infty a_{\ell k}\hat{v}_k(t)\right )^2\right )^{1/2}.\]
In view of Lemma 6, we have \(|a_{\ell k}| \leq C_a\). Therefore,
\[\norm{\vb R(t)}_\infty \leq \left ( \sum_{\ell = 0}^n \left ( C_a \sum_{k=n+1}^\infty |\hat{v}_k(t)|\right )^2\right )^{1/2} = C_a \sqrt{n+1} \sum_{k=n+1}^\infty |\hat{v}_k(t)|.\]
Finally, we relate the error vector \(\vb e(t)\) back to the transformed coordinates. Since the orthogonal transformation preserves the Euclidean norm, \(\norm{\vb e(t)}_2 = \norm{\vb d(t)}_2\), in view of the equivalence of the Euclidean norm and maximum norm in \(\mathbb{R}^{n+1}\), we get
\[\norm{\vb e(t)}_2 \leq \sqrt{n+1} \norm{\vb d(t)}_\infty.\]
Substituting the derived bounds yields
\[\norm{\vb e(t)}_2 \leq \sqrt{n+1} K \norm{\vb R}_\infty \leq K C_a (n+1) \sum_{k=n+1}^\infty |\hat{v}_k(t)|.\]
Taking the limit as \(n \to \infty\), Lemma \ref{superlinear} ensures that the right-hand side converges uniformly to zero. Therefore,
\[\lim_{n \to \infty} \max_{t \in [0, T]} \norm{\vb e(t)}_2 = 0.\]
This concludes the proof.
\end{proof}

\section{Numerical Experiments}
In this section, we demonstrate the performance of the proposed stochastic Galerkin method in handling the uncertainty in the considered model and numerically validate the theoretical results established in the previous sections. To solve the stochastic Galerkin system \eqref{sysmat}-\eqref{sysbound}, we employ a shooting method based on the solver \texttt{fde12} \cite{garrappa2010linear} for systems of fractional differential equations. Since the underlying method requires an appropriate initial velocity (first-order derivative), we use the differential evolution algorithm to determine the unknown initial velocity so that the prescribed boundary condition is satisfied~\cite{PulchSingh2026}. All numerical experiments were performed in MATLAB on a computer equipped with an 11th Gen Intel(R) Core(TM) i5-1135G7 @ 2.40 GHz, 2419 MHz, 4 Core(s) using the Microsoft Windows 11 operating system.\par 
We begin with a numerical illustration of Lemma~\ref{auxiliary} by examining the following two cases of the random coefficient function $a$ in the problem \eqref{fde}:\\

\textbf{Case 1:} $\qquad a(\xi)=2+(\xi-1)^5,$\\

\textbf{Case 2:}$\qquad a(\xi)=
\begin{cases}
\xi^{3/2}, & \text{if } \xi \geq 0,\\[2mm]
-(-\xi)^{3/2}, & \text{if } \xi < 0,
\end{cases}$

where $\xi$ is uniformly distributed in the interval $[1,2]$ and $[-1,1]$ in Case 1 and Case 2, respectively.
\begin{figure}[!ht]
\begin{center}
\centering
\subfigure[Case 1]{%
\includegraphics[scale=0.44]{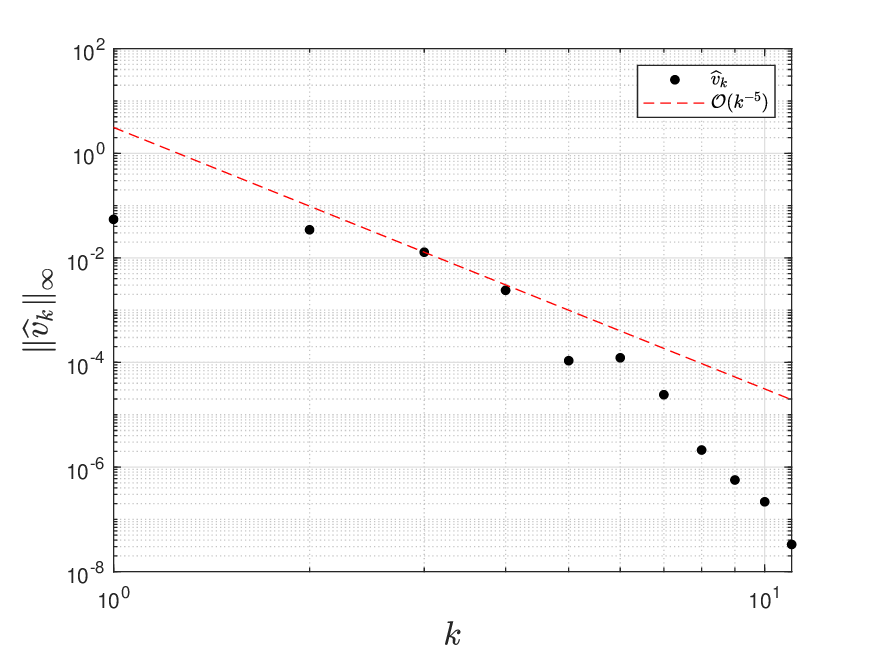}
\label{fig1Lemma3}}
\quad
\subfigure[Case 2]{%
\includegraphics[scale=0.44]{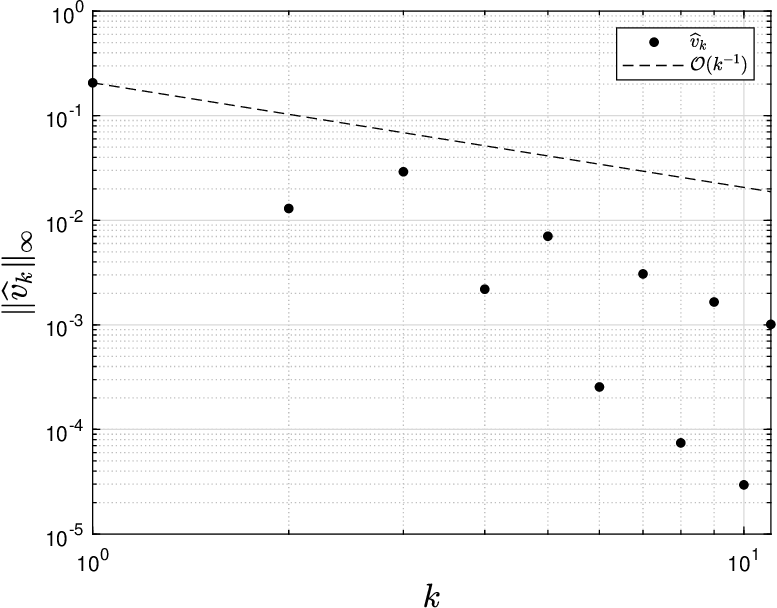}
\label{fig2Lemma3}}
\caption{Validation of Lemma \ref{auxiliary} for $\alpha = 1.5$ in Case 1 and Case 2.}
\label{fig_Lemma3}
\end{center}
\end{figure}
Moreover, we consider the boundary conditions in \eqref{bound} as
\[
y(0,\xi)=2, \qquad y(1,\xi)=4,
\]
for all $\xi$ in both cases.
Although the gPC expansion in \eqref{gpc} involves infinitely many coefficients, for the numerical illustration we compute only the coefficients $\hat{v}_k$ for $k=1,\ldots,11$, using a 60-point Gauss-Legendre quadrature rule for the integrals that define the coefficients with the value of $\alpha = 1.5$ and $T = 1$. This is obtained by substituting the expansion of \(y(t,\xi)\) from Equation~\eqref{sol} into Equation~\eqref{gpc}, taking the inner product with the chosen basis functions, and applying the given boundary conditions. This number of coefficients is sufficient to illustrate the theoretical decay behavior proved in Lemma~\ref{auxiliary}, although the computation can readily be extended to larger values of $k$. 

In Case~1, the coefficient function $a$ is smooth and, in particular, belongs to the Sobolev spaces $W^{5,\infty}(1,2)$. The constant $C$ in Lemma~\ref{auxiliary} is estimated from the computed coefficients $\hat{v}_k$ so that the corresponding theoretical bound is as sharp as possible, which results in $C=3.1186$. 
Figure~\ref{fig_Lemma3}(a) shows that the maximum value of the computed gPC coefficients $\hat{v}_k$ over the time interval remains below the reference decay rate $\mathcal{O}(k^{-5})$, which is in agreement with the theoretical estimates presented in Lemma~\ref{auxiliary}. 

In Case~2, the coefficient function $a$ has substantially lower regularity because of its singular derivative at \(\xi=0\) and, in particular, belongs to $W^{1,\infty}(-1,1)$. The computed gPC coefficients exhibit a slower and oscillatory decay, as shown in Figure~\ref{fig_Lemma3} (b). The corresponding reference constant obtained from the computed coefficients is $C=0.2063$, and the numerical results exhibit a decay below \(\mathcal{O}(k^{-1})\). 

Next, we numerically validate the superlinear convergence result for the gPC expansion coefficients established in Lemma~\ref{superlinear}, using the data from Case~1. For the numerical computation, the infinite sum in Lemma \ref{superlinear} is approximated by a truncation at $k=1000$. Define
\begin{equation*}
    V_{n}(t) := (n+1)\sum_{k = n+1}^{1000} \abs{\hat{v}_k(t)}. 
\end{equation*}
We compute $\hat{v}_k(t)$ at $t=0.5$ and $t=1.5$ using Gauss--Legendre quadrature with fractional order $\alpha=1.5$. 
From Figure \ref{fig_Lemma5}, one can observe that $V_{n}$ decreases rapidly as $n$ increases, which validates the superlinear convergence behaviour established in Lemma \ref{superlinear}.\par 
\begin{figure}[!ht]
\begin{center}
\centering
\includegraphics[scale=0.5]{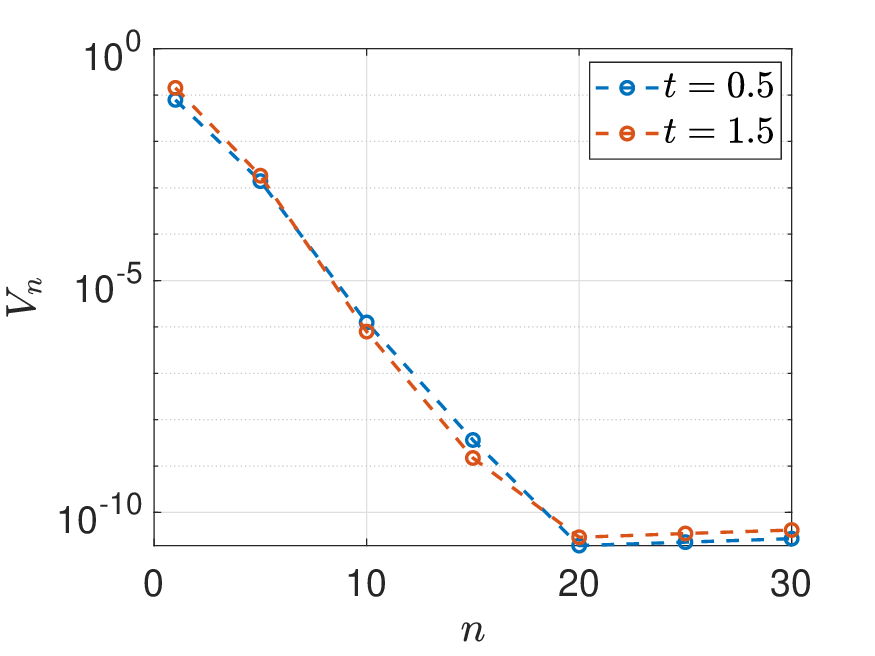}
\caption{Validation of Lemma \ref{superlinear} for $\alpha = 1.5$ at $t = 0.5$ and $t = 1.5$ in Case 1.}
\label{fig_Lemma5}
\end{center}
\end{figure}

Next, we examine the estimates established in Lemmas~\ref{abound} and \ref{lambdabound} using the same data as in Cases~1 and 2.  The constants $C_{a}$ in these cases can be easily computed as $3$ and $1$, respectively. For $n=3$, let $A_1$ and $A_2$ denote the corresponding stochastic Galerkin matrices for Cases~1 and 2, respectively. These matrices, with their entries rounded to four decimal places, are given by
\begin{center}
\begin{minipage}{0.48\textwidth}
\centering
\[
A_{1} =
\begin{pmatrix}
2.1667 & 0.2062 & 0.1331 & 0.0525  \\
0.2062 & 2.2857 & 0.2305 & 0.1273  \\
0.1331 & 0.2305 & 2.2619 & 0.2134  \\
0.0525 & 0.1273 & 0.2134 & 2.2525 
\end{pmatrix},
\]
\end{minipage}
\hfill
\begin{minipage}{0.48\textwidth}
\centering
\[
A_{2}=
\begin{pmatrix}
0      & 0.4945 & 0      & 0.0697  \\
0.4945 & 0      & 0.5035 & 0        \\
0      & 0.5035 & 0      & 0.4596   \\
0.0697& 0      & 0.4596 & 0       
\end{pmatrix}.
\]
\end{minipage}
\end{center}
It can be observed that the absolute values of the entries of $A_1$ and $A_2$ are bounded by the corresponding constants $3$ and $1$, respectively, in agreement with the assertion of Lemma \ref{abound}. Furthermore, to illustrate Lemma \ref{lambdabound}, we plot the spectra of the matrices of the stochastic Galerkin system for different values of $n$ in both cases. As shown in Figure \ref{fig_Lemma7}, the computed spectra remain within the bounds established in Lemma \ref{lambdabound}. We can also see the sharper bounds $\lambda \in [2,3]$ and $\lambda \in [-1,1]$ due to $a \in [2,3]$ and $a \in [-1,1]$, respectively.\par 
\begin{figure}[!ht]
\begin{center}
\centering
\subfigure[Case 1]{%
\includegraphics[scale=0.44]{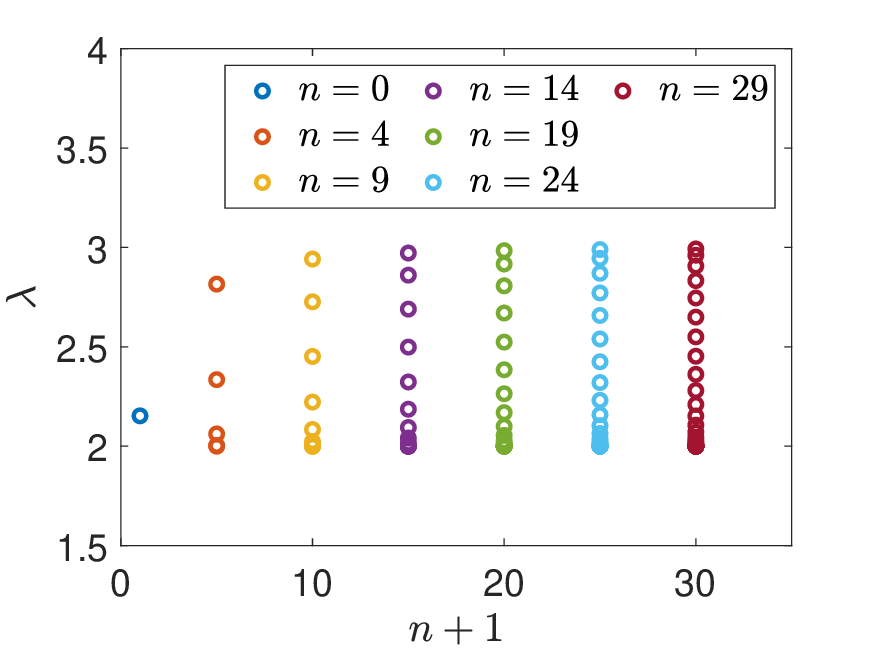}
\label{fig1Lemma7}}
\quad
\subfigure[Case 2]{%
\includegraphics[scale=0.44]{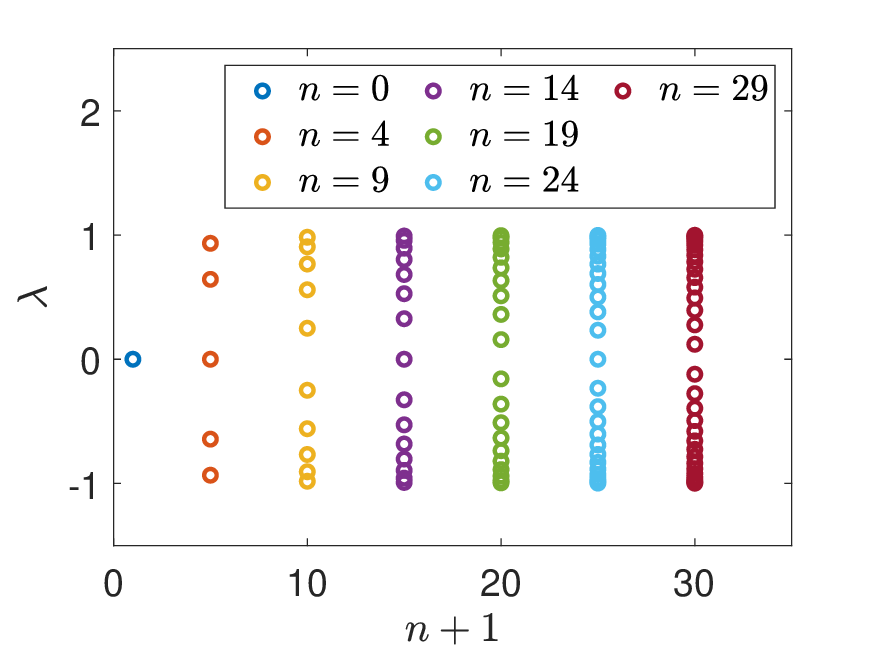}
\label{fig2Lemma3}}
\caption{Spectra of the matrices \(A \in \mathbb{R}^{(n+1)\times (n+1)}\) for different numbers \(n+1\) of gPC coefficients in the Case $1$ and Case $2$.}
\label{fig_Lemma7}
\end{center}
\end{figure}

Finally, we investigate the validation of the main convergence result established in Section \ref{converge} and the behavior of the numerical solution of the stochastic Galerkin system \eqref{sysmat} developed in Section \ref{sgmethod}. For this purpose, we consider random boundary data, i.e., the boundary conditions also depend on a random parameter $\xi$, which provides a more general setting than the data considered in the preceding experiments. We define the following form of $a(\xi)$:
\begin{equation}\label{num_eq1}
    a(\xi)=\xi^{p\alpha},
    \tag{11}
\end{equation} 
where $\xi$ is uniformly distributed on \([\xi_{\min},\xi_{\max}]\), i.e., $\xi \sim U(\xi_{\min},\xi_{\max})$ and $p$ is any positive real number.
Moreover, we consider the boundary conditions 
$$y(0,\xi) = c_{1}+c_{2}\xi^{5} \quad  \text{and} \quad y(1,\xi) = d_{1}+d_{2}\xi^{5}.$$ In view of Equation \eqref{sysbound}, we compute the boundary conditions for the stochastic Galerkin system. In particular, for \(n=2\), \(c_{1}=2\), \(c_{2}=1\), \(d_{1}=4\), \(d_{2}=3\), and \(\xi \sim U(0,2)\) the boundary conditions become
\[
\vb v^{(2)}(0) = [7.3333,\,6.5982,\,4.2591]^{'}
\quad \text{and} \quad
\vb v^{(2)}(1) = [20.0000,\,19.7948,\,12.7775]^{'}.
\]
These boundary conditions are independent of
\(a(\xi)\) given by Equation \eqref{num_eq1} and thus independent of the value $p$. To investigate the convergence of the stochastic Galerkin approximation, we define a measure motivated by the property of a Cauchy sequence
\begin{equation*}
E_{i,n}^{(2)} 
:=
\left(
\sum_{k=0}^{n}
\bigl(v_i(t_{k})-v_{i-1}(t_k)\bigr)^2
\right)^{1/2},\quad i = 1,2,\ldots,n.
\end{equation*}

\begin{figure}[!ht]
\begin{center}
\centering
\subfigure[$\alpha = 1.1$]{%
\includegraphics[scale=0.32]{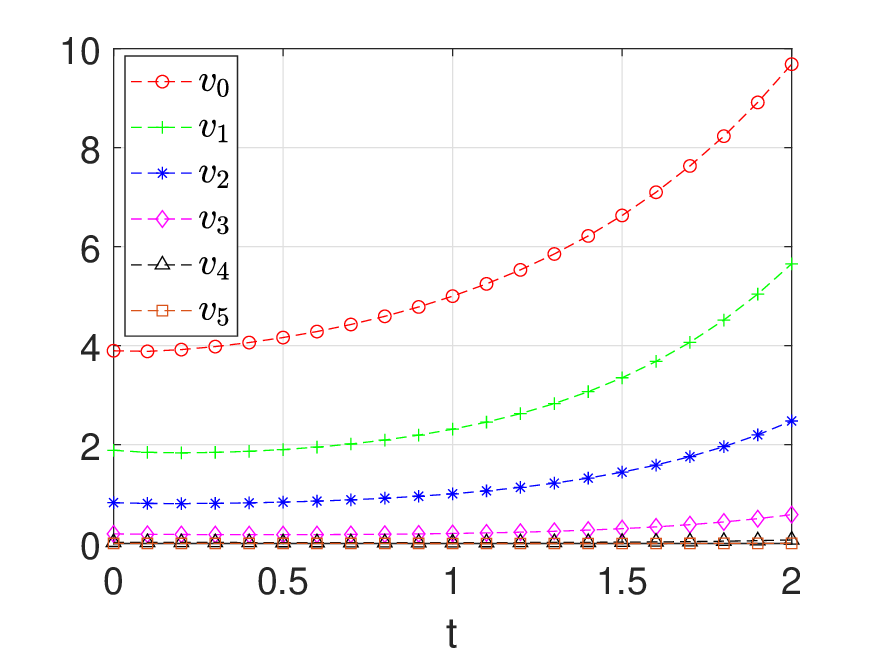}
\label{fig1:1}}
\quad
\subfigure[$\alpha = 1.5$]{%
\includegraphics[scale=0.32]{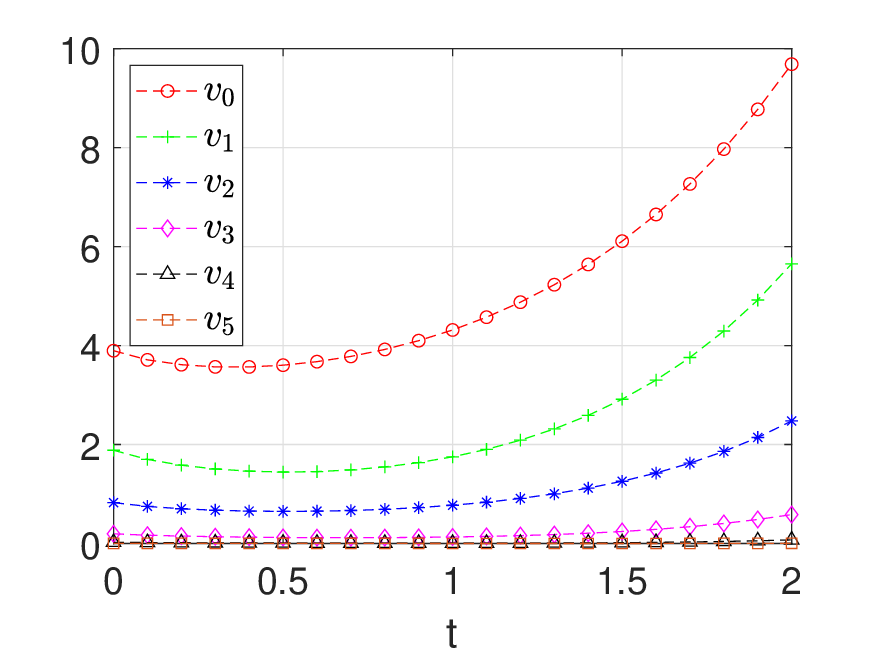}
\label{fig1:2}}
\quad
\subfigure[$\alpha = 1.9$]{%
\includegraphics[scale=0.32]{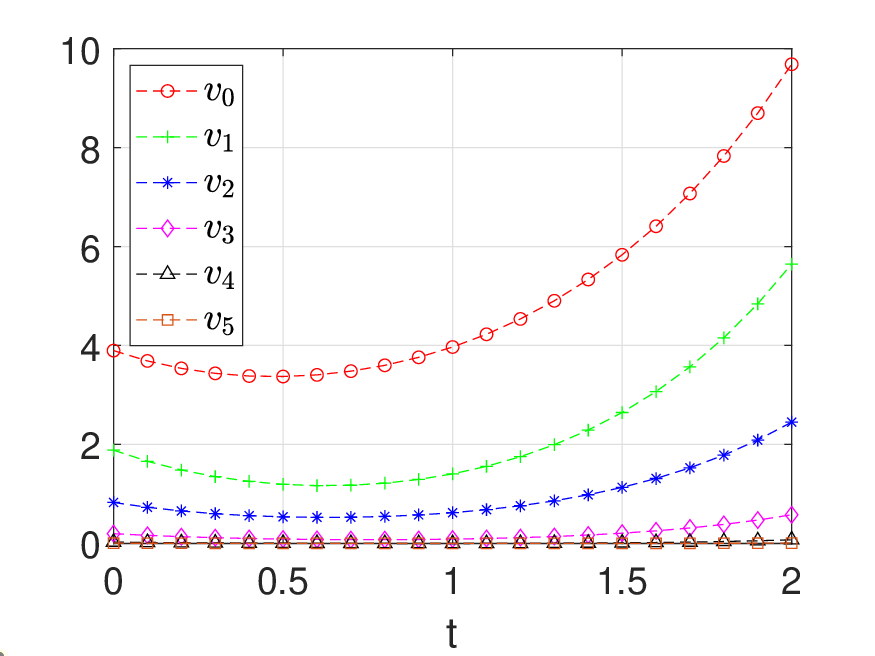}
\label{fig1:3}}
\caption{Numerical approximation for coefficient functions of the stochastic Galerkin system for $n=5,~\xi_{min} = 0.5,~\xi_{max} = 1.5$ with different values of $\alpha$ using $a(\xi)=\xi^{p\alpha}$ with $p = 1$ .}
\label{fig:case1}
\end{center}
\end{figure}

\begin{figure}[!ht]
\begin{center}
\centering
\subfigure[$\alpha = 1.1$]{%
\includegraphics[scale=0.32]{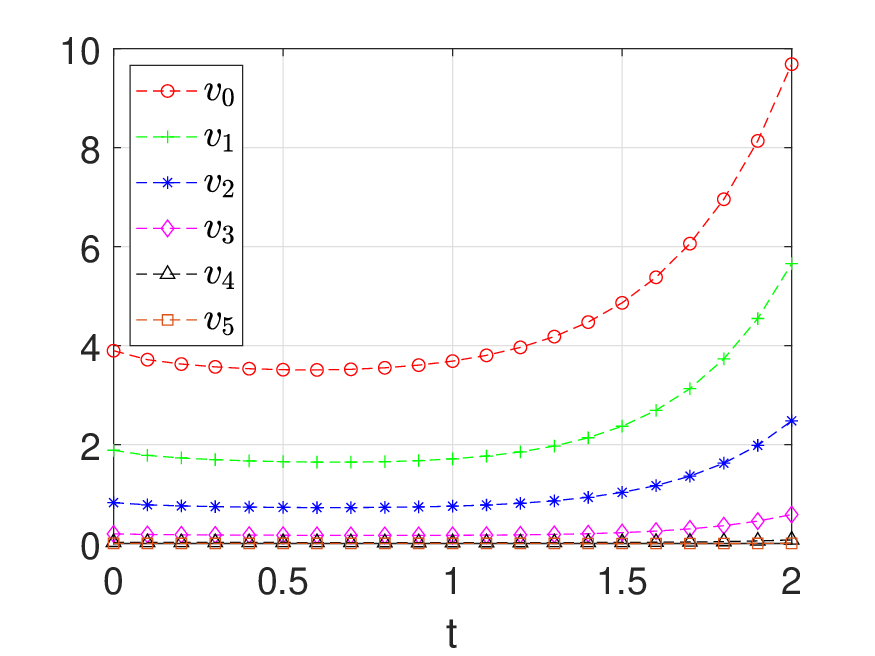}
\label{fig2:1}}
\quad
\subfigure[$\alpha = 1.5$]{%
\includegraphics[scale=0.32]{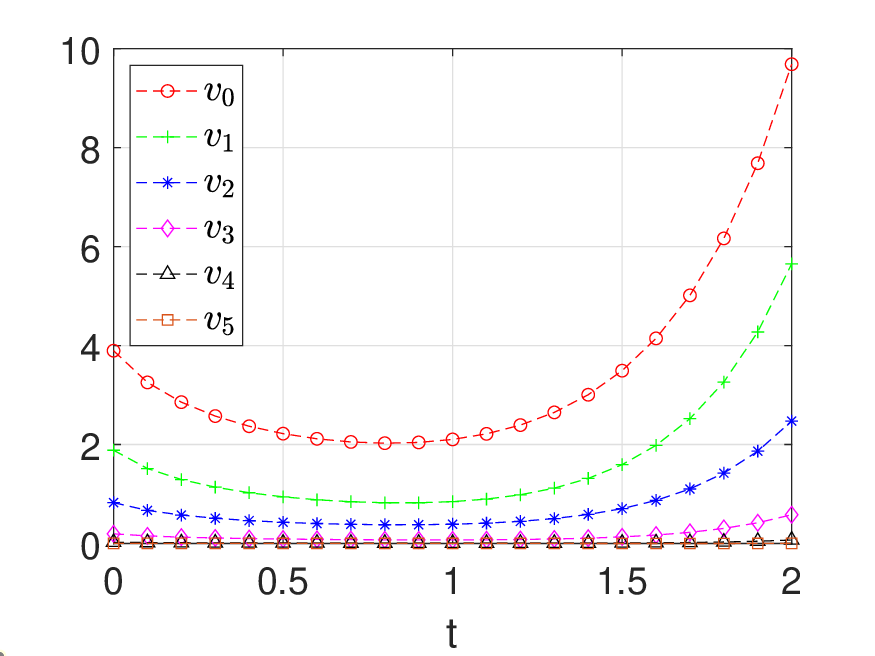}
\label{fig2:2}}
\quad
\subfigure[$\alpha = 1.9$]{%
\includegraphics[scale=0.32]{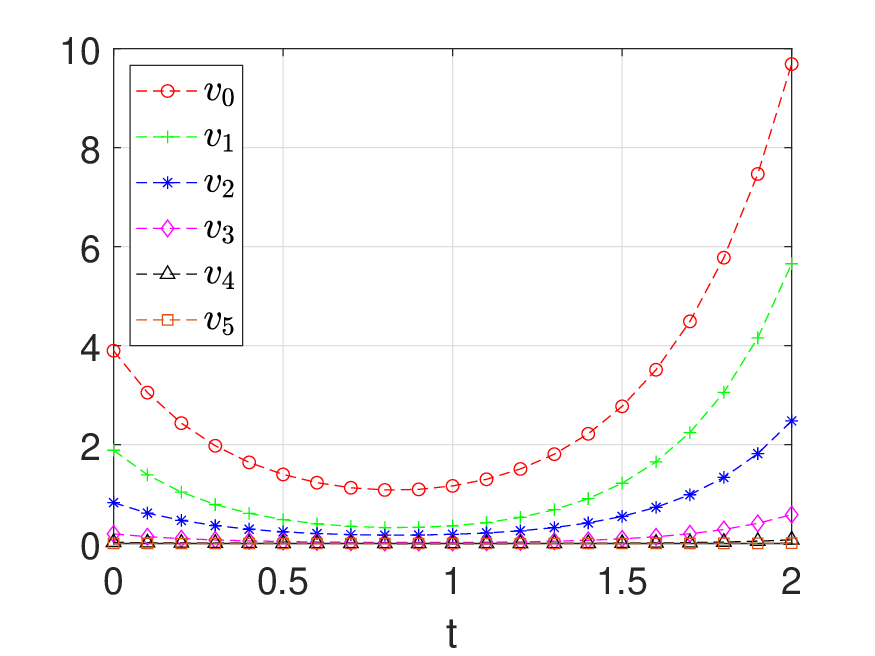}
\label{fig2:3}}
\caption{Numerical approximation for coefficient functions of the stochastic Galerkin system for $n=5,~\xi_{min} = 2,~\xi_{max} = 3$ with different values of $\alpha$ using $a(\xi)=\xi^{p\alpha}$ with $p = 1$.}
\label{fig:case2}
\end{center}
\end{figure}

Figures \ref{fig:case1} and \ref{fig:case2} show the numerical solutions of the stochastic Galerkin system for different values of \(\alpha\) with $n = 4$, for \(\xi_{min} = 0.5,~ \xi_{max} = 1.5\) and \(\xi_{min} = 2,~ \xi_{max} = 3\), respectively. In both cases, the coefficient \(v_{0}(t)\), which represents the mean component of the solution, has the largest magnitude, while the higher-order coefficients \(v_{k}(t)\), \(1 \leq k \leq 4\), are considerably smaller. The relatively small magnitude of the higher-order coefficients indicates that the variance of the solution is moderate because the solution is smoothly dependent on $\xi$. This behavior is expected since \(a(\xi)=\xi^\alpha\) is smooth with respect
to \(\xi\) on the considered intervals, which are bounded away from zero. Furthermore, varying the fractional order \(\alpha\) slightly changes the magnitudes of the coefficient functions while preserving the overall qualitative behavior of the solution. \par

\begin{figure}[!ht]
\begin{center}
\centering
\subfigure[$\xi_{min} = 0,\xi_{min} = 2,p =2$]{%
\includegraphics[scale=0.44]{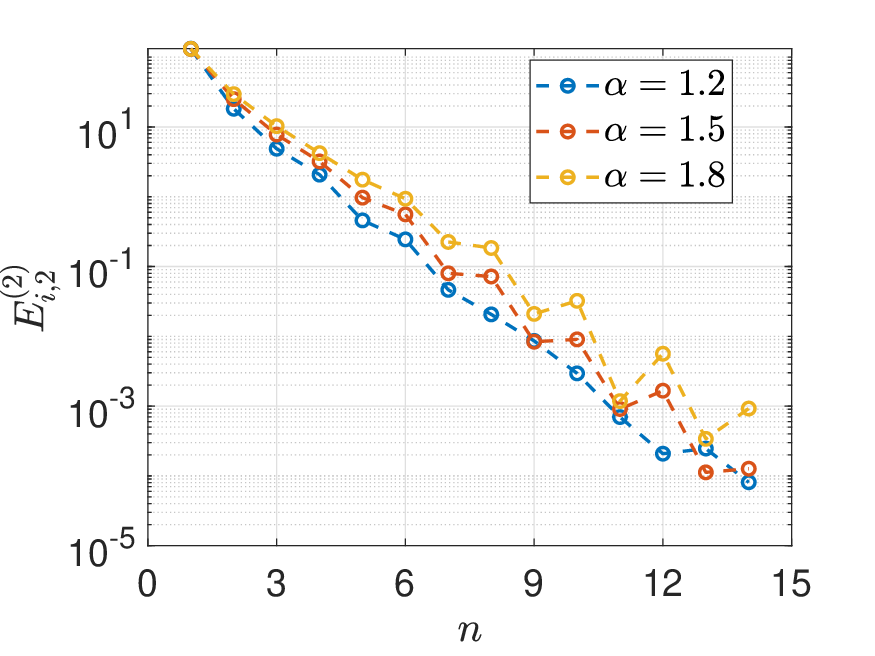}
\label{fig1thm1}}
\quad
\subfigure[$\xi_{min} = 0,\xi_{min} = 3, p =1$]{%
\includegraphics[scale=0.44]{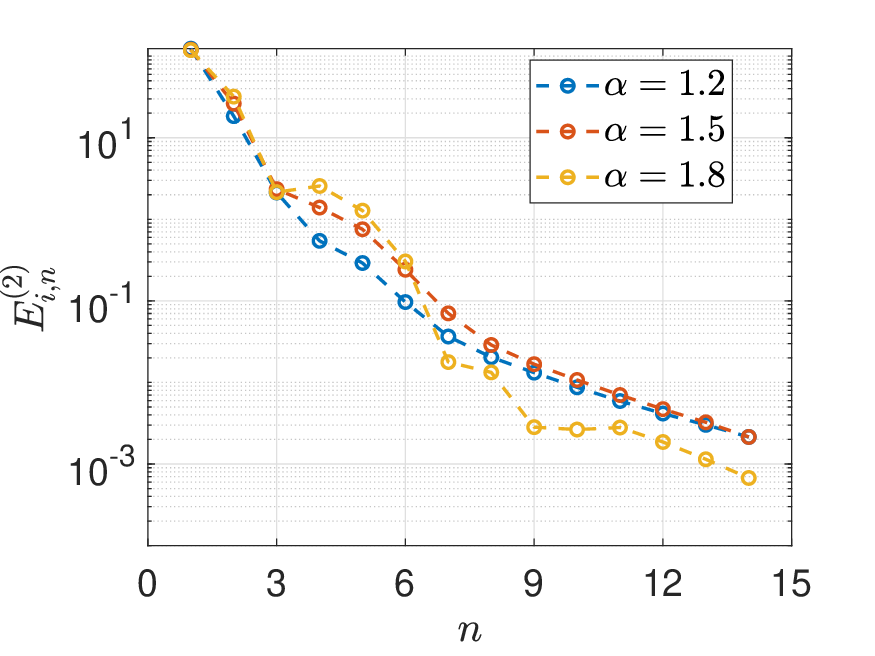}
\label{fig2thm1}}
\caption{Convergence plot for different values of \(\alpha\) with \(c_{1}=2\), \(c_{2}=0\), \(d_{1}=4\), and \(d_{2}=4\).}
\label{fig_thm1}
\end{center}
\end{figure}

Figure \ref{fig_thm1} displays the convergence behavior of the stochastic Galerkin system for different values of $\alpha,~\xi_{min},~\xi_{max},~c_1,~c_2,~d_{1},~d_{2}$ and $p$. In all the considered cases, $E_{i,n}^{(2)}$ decreases as $n$ increases, which provides the numerical confirmation of the convergence result established in Section~\ref{converge}.

\begin{figure}[!ht]
\begin{center}
\centering
\subfigure[$\xi_{min} = -1,~\xi_{max} = 0$]{%
\includegraphics[scale=0.44]{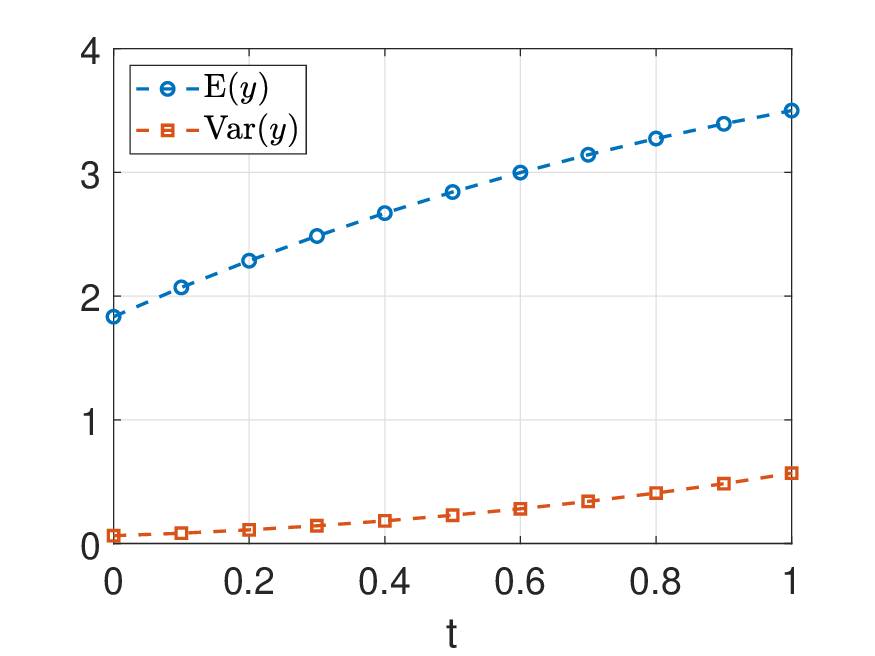}
\label{figmean:1}}
\quad
\subfigure[$\xi_{min} = 0,~\xi_{max} = 1$]{%
\includegraphics[scale=0.44]{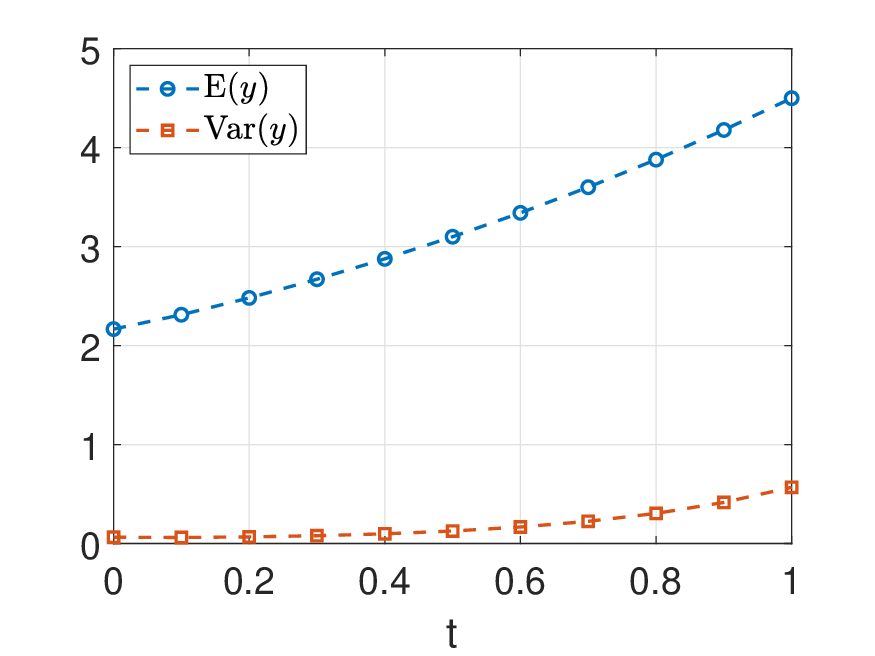}
\label{figmean:2}}
\caption{Numerical approximations of the mean and variance obtained using the stochastic Galerkin system with $n=10$ and $\alpha=1.5$.}
\label{fig:mean_var}
\end{center}
\end{figure}
The coefficient functions of the stochastic Galerkin system~\eqref{sysmat} provide approximations of the expected value and the variance of the random process. Specifically, we obtain
\begin{equation}\label{mean}
\mathbb{E}(y(t,\xi)) \approx v_{0}(t),
\qquad
\operatorname{Var}(y(t,\xi)) \approx \sum_{i=1}^{n} v_{i}^{2}(t).\tag{12}
\end{equation}
We compute the mean and variance defined by Equation \eqref{mean} for \(a(\xi)=\xi^{p\alpha}\) and different sets of parameters $\xi_{max}$ and \(\xi_{max}\), with fixed values \(c_{1}=2\), \(c_{2}=1\), \(d_{1}=4\), \(d_{2}=3\), and \(\alpha = 1.5\), while setting \(p=\frac{1}{\alpha}\). Figure~\ref{fig:mean_var} shows the numerical approximations of the expected value and variance obtained from~\eqref{mean} for the considered data with \(n=10\), for \(\xi_{min} =-1,~\xi_{max}=0\) and \(\xi_{min}=0,~\xi_{min}=1\). From Figure~\ref{fig:mean_var}, one can observe that the variance remains relatively small compared with the magnitude of the mean in both cases. Although the two cases exhibit a different behavior of the variance near the boundary, this difference is expected because the stochastic Galerkin system depends on the boundary conditions, which themselves depend on the random variable \(\xi\). Consequently, the choice of the parameters \(a\) and \(b\) influences both the mean of the solution and the associated uncertainty over the interval.

\section{Conclusion}
In this work, we considered the stochastic Galerkin method for two-point fractional boundary value problems with random parameters. Our analysis shows that, for the convergence study, it is not necessary to assume the required properties of the stochastic coefficients directly. We instead start with regularity assumptions on the given input data and derive the corresponding properties of the stochastic coefficients from them. In this way, the assumptions used in the convergence analysis are connected directly to the data of the original problem. \par
The numerical experiments support the theoretical analysis and illustrate the effect of uncertainty on the solution. In particular, they show how random coefficients and random boundary conditions influence the mean and variance of the solution. The results also indicate that the behavior of the uncertainty can depend significantly on the choice of boundary data. The present work is restricted to linear two-point fractional boundary value problems with a finite-dimensional representation of the random inputs. It would therefore be interesting to extend the analysis to nonlinear problems and to models involving a larger number of random variables.\par

\section*{Declarations}
\subsection*{Data Availability Statement}
No data is available for this manuscript.
\subsection*{Conflict of Interest}
The author(s) declared no potential conflicts of interest with respect to the research, authorship, and/or publication of this article.
\section*{Acknowledgments}
The third author acknowledges the Anusandhan National Research Foundation (ANRF), Government
of India, for supporting this work through the Prime Minister Early Career Research
Grant (PM-ECRG) under Grant No. ANRF/ECRG/2024/002135/PMS.

\printbibliography


\end{document}